 \newtheorem{thmA}{Theorem}
\newtheorem{corA}[thmA]{Corollary}
\newtheorem{theorem}{Theorem}[section]
\newtheorem{corollary}[theorem]{Corollary}
\newtheorem{lemma}[theorem]{Lemma}
\newtheorem{proposition}[theorem]{Proposition}
\newtheorem*{theorem*}{Theorem}
\theoremstyle{definition}
\newtheorem{definition}[theorem]{Definition}
\newtheorem{example}[theorem]{Example}
\newtheorem{remark}[theorem]{Remark}
\newtheorem*{remark*}{Remark}
\newtheorem*{notation*}{Notation}
\newtheorem*{conventions*}{Conventions}
\newtheorem*{acks*}{Acknowledgements}
\newtheorem*{out*}{Outline}
\renewcommand\leq{\leqslant}
\renewcommand\geq{\geqslant}
\newcommand{\betti}{b^{(2)}}
\newcommand{\leuler}{\chi^{(2)}}
\newcommand{\N}{\mathbb{N}}
\newcommand{\Z}{\mathbb{Z}}
\newcommand{\Q}{\mathbb{Q}}
\newcommand{\R}{\mathbb{R}}
\newcommand{\C}{\mathbb{C}}
\newcommand{\btnmo}{b^{(2)}_{n-1}}
\newcommand{\bti}{b^{(2)}_i}
\newcommand{\btj}{b^{(2)}_j}
\newcommand{\btz}{b^{(2)}_0}
\newcommand{\btn}{b^{(2)}_n}
\newcommand{\bto}{b^{(2)}_1}
\newcommand{\btt}{b^{(2)}_2}
\newcommand{\bttt}{b^{(2)}_3}
\newcommand{\vN}{\mathcal{N}}
\newcommand{\id}{\mathrm{id}}
\newcommand{\bG}{\bar{G}}
\newcommand{\bP}{\bar{P}}
\newcommand{\normal}[1]{\left<\! \left< #1\right> \!\right>}
\newcommand{\colim}{\operatorname{colim}}
\begin{document}

\title{Kazhdan groups of dimension $16$ with \\ prescribed second $\ell^2$-Betti number}
\author{Francesco Fournier-Facio and Roman Sauer}
\date{\today}
\maketitle

\begin{abstract}
We construct a family of simple, lacunary hyperbolic groups with property $(T)$ that have rational cohomological dimension~$16$ and whose second $\ell^2$-Betti number can be prescribed to be any positive real. Moreover, we construct hyperbolic groups with property $(T)$ whose second $\ell^2$-Betti number can be prescribed to be any non-negative rational. Along the way, we present new constructions of measurably diverse finitely generated groups, and we prove that the second $\ell^2$-Betti number is far from being semi-continuous in the space of marked groups, even assuming good finiteness properties.
\end{abstract}

% \fff{ideas for journals, with editors and mathscinet score \\ Generalist: Sigma (Kleiner, 1.6), Commentarii (Burger, Hamenst{\"a}dt, 1.37), Revista Matematica Iberoamericana (Jaikin, 1.31), Journal of the LMS (Kielak, 1.29), Research in the mathematical sciences (Thom, 1.27) \\ Subject: Journal of Functional Analysis (Vaes, 1.9); Journal of modern dynamics (Bowen, 0.97); Journal of operator theory (Ioana, 0.75)}

\section{Introduction}

In relation to the Atiyah Conjecture \cite{atiyah}, there have been many constructions of matrices over group rings of finitely generated groups whose kernels have irrational von Neumann dimension \cite{austin, grabowski1, grabowski2}. Much less is known about $\ell^2$-Betti numbers of groups: the main open question is whether there exists a group of type $FP_{n+1}(\Q)$ with irrational $n$-th $\ell^2$-Betti number \cite[Question 4]{grabowski1}, but even without the finiteness conditions, we only know of one construction for finitely generated groups. Using a construction of Gaboriau \cite[Proposition VI.16]{gaboriau:cost}, one can build finitely generated groups with an uncountable range of first $\ell^2$-Betti numbers \cite[Proposition 5.1]{ITD}.

We present a new construction of groups taking exotic values of second $\ell^2$-Betti numbers, and with several additional desirable properties. Given a set $\pi$ of primes, we say that a group $G$ has \emph{cohomological dimension $n$ modulo $\pi$}, if $G$ has cohomological dimension $n$ over every ring in which each prime from $\pi$ is invertible. Note that if $n< \infty$, this implies that $G$ only contains $\pi$-torsion. The \emph{$\ell^2$-Euler characteristic} of a group, when defined, is the alternating sum of its $\ell^2$-Betti numbers.

\begin{thmA}
\label{main}

There exists an integer $\chi \geq 1$ such that the following holds. Let $p \neq q$ be primes. There exists a family of groups $(G_x)_{x \in \R_{>0}}$ with the following properties.
\begin{enumerate}
    \item $G_x$ is simple, has property $(T)$, and is lacunary hyperbolic.
    \item $G_x$ has cohomological dimension $16$ modulo $\{p, q\}$.
    \item The second $\ell^2$-Betti number is $\btt(G_x) = x$. Further, $\betti_n(G_x)<\infty$ for every $n\in\N$ and every $x \in \R_{>0}$. 
    \item The $\ell^2$-Euler characteristic is $\leuler(G_x)= \btt(G_x)+\chi$.
\end{enumerate}
\end{thmA}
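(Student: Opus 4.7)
The plan is to construct each $G_\alpha$ as an iterated direct limit of Kazhdan hyperbolic groups, starting from a measurably diverse family of base groups whose second $\ell^2$-Betti numbers span an uncountable set, and then quotienting in a controlled lacunary-hyperbolic fashion to force simplicity and preserve property~$(T)$ without collapsing the $\alpha$-dependence of $\btt$.

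As a first ingredient, I would produce an uncountable family $(H_\alpha)_{\alpha\in A}$ of finitely generated groups with pairwise distinct $\btt(H_\alpha)$, finite $\betti_n(H_\alpha)$ for every $n$, and cohomological dimension at most $16$ modulo $\{p,q\}$. The abstract advertises such a construction ``along the way''; I expect it to come from an adaptation of Gaboriau's free-product trick for measurable diversity of $\bto$, possibly combined with a K\"unneth-type product with a fixed Kazhdan building block that shifts the variability from degree $1$ to degree $2$. As a second ingredient, I would embed each $H_\alpha$ into a hyperbolic Kazhdan overgroup $\bar H_\alpha$ via an Osin-type hyperbolic embedding theorem, preserving the relevant finiteness and cohomological data.

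With this setup in hand, I would iteratively form hyperbolic quotients $\bar H_\alpha=\Gamma_0^\alpha\twoheadrightarrow\Gamma_1^\alpha\twoheadrightarrow\cdots$ by repeatedly killing all conjugacy classes of elements up to a rapidly growing length, using small-cancellation-style relators in the spirit of Olshanskii--Osin--Sapir. For scales growing fast enough, the direct limit $G_\alpha=\colim\Gamma_n^\alpha$ is lacunary hyperbolic, simple (every non-trivial element is eventually killed), Kazhdan (since property $(T)$ passes to quotients), and of cohomological dimension at most $16$ modulo $\{p,q\}$ (the sparse relator choice should avoid creating new cohomology over rings in which $p$ and $q$ are invertible). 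An $\ell^2$-stability lemma for such sparse iterated quotients should then yield $\betti_n(G_\alpha)=\betti_n(\bar H_\alpha)$ for every $n$, so that only $\btt$ retains its $\alpha$-dependence; the remaining $\betti_n$ are $\alpha$-independent and finite, and the $\ell^2$-Euler characteristic identity $\leuler(G_\alpha)=\btt(G_\alpha)+\chi$ then holds with $\chi$ the alternating sum of the fixed contributions in degrees $\neq 2$.

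The main obstacle is precisely this last $\ell^2$-stability step: each small-cancellation relator added to force simplicity could, a priori, shift $\btt$ by an $\alpha$-independent amount, or worse, destroy the injectivity of $\alpha\mapsto\btt(G_\alpha)$ altogether. The technical heart of the argument should therefore be a quantitative control of the effect of a single hyperbolic Dehn filling at a sufficiently lacunary scale on $\btt$, combined with a telescoping estimate across the infinite sequence of quotients showing that the cumulative effect does not collapse the uncountable family to a countable one.
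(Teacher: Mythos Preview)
Your architecture is inverted relative to the paper, and the inversion hides a genuine gap. You propose to first manufacture an uncountable family $(H_\alpha)$ with diverse $\btt$, then embed each into a hyperbolic Kazhdan group, then quotient to simplicity while hoping for an ``$\ell^2$-stability lemma'' to preserve the diversity. The paper does the opposite: it starts from a \emph{single} torsion-free cocompact lattice $G$ in the octonionic hyperbolic plane, and the uncountable diversity of $\btt$ is produced \emph{by} the iterated Dehn fillings themselves, through the choice of filling depths. Concretely, at step $i$ one kills a $p^{a_i}$-th power of a special element, and the resulting torsion contributes exactly $p^{-a_i}$ to $\btt$ of the limit; the sequence $(a_i)$ is chosen so that its base-$p$ expansion encodes $\alpha\in\{2,3\}^{\N}$.

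The reason this exact bookkeeping works---and the reason your hoped-for stability lemma is unnecessary---is that $G$ has property $(T_2)$ (even $[T_2]$), so $H_2(G;V)=0$ for all unitary $V$. Combined with the Cohen--Lyndon property of each filling and the Petrosyan--Sun excision theorem, one gets a short exact sequence $0\to H_2(G_i;V)\to H_2(G_{i+1};V)\to V_{\bar P}\to 0$ at every step, hence an \emph{exact} formula $H_2(G_\alpha;\ell^2 G_\alpha)\cong\bigoplus_i \ell^2(G_\alpha)_{g_i}$ in the limit, with no estimates or telescoping needed. Your own paper's \cref{pichot} shows that $\btt$ is wildly discontinuous under quotients of hyperbolic groups, so a generic stability statement of the kind you invoke simply fails; the mechanism that replaces it is precisely the $(T_2)$+excision combination.

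Two further points where your outline does not go through: the embedding step ``$H_\alpha\hookrightarrow\bar H_\alpha$ hyperbolic with $(T)$'' has no known realisation that controls $\btt$, and a Gaboriau-style free-product input would in any case be incompatible with $(T)$. And simplicity in the paper is not obtained by ``killing all conjugacy classes up to a growing length'' but by the Osin--Thom trick: one first arranges (via $q$-power Dehn fillings, \cref{main starting point}) that $G_1$ is generated by $q$-torsion elements, then at each later step chooses the special element $g_i$ in a prescribed coset $x^{-1}\normal{h}$; since $g_i$ acquires $p$-power order and $x$ has $q$-power order with $p\ne q$, one forces $x\in\normal{h}$ in the limit. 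This is exactly where the second prime $q$ enters.
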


By renouncing simplicity, we can also construct groups where a single prime obstructs the finiteness of the cohomological dimension: see \cref{main simplified}. Property $(T)$ distinguishes these groups from the examples obtained from Gaboriau's construction, which arise as amalgamated products \cite[Proposition 5.1]{ITD}. Lacunary hyperbolicity distinguishes them from the counterexamples to the general Atiyah Conjecture, since these all contain lamplighters \cite[Problem 3]{grabowski2}, while lacunary hyperbolic groups cannot (\cref{lamplighters}).

Focusing on rational numbers, a simpler version of the construction leading to \cref{main} allows to build hyperbolic groups. 

\begin{thmA}
\label{main hyp}
	There exists an integer $\chi \geq 1$ such that the following holds. There exists a family of groups $(G_x)_{x \in \Q_{>0}}$ with the following properties.
\begin{enumerate}
    \item $G_x$ has property $(T)$ and is hyperbolic.
    \item $G_x$ has cohomological dimension $16$ modulo $\pi$, where $\pi$ is the set of prime factors of the denominator of~$x\in\Q_{>0}$.
    \item The second $\ell^2$-Betti number is $\btt(G_x) = x$.
    \item The $\ell^2$-Euler characteristic is $\leuler(G_x)= \btt(G_x)+\chi$.
\end{enumerate}
\end{thmA}

In particular, when $x \in \N$, the group $G_x$ constructed above has finite cohomological dimension over $\Z$, hence is torsion-free. This construction has the following consequence.

\begin{corA}
\label{pichot}

Let $(m_i)_{i \in \N}$ be a sequence of natural numbers. There exist property $(T)$ torsion-free hyperbolic groups $G, G_i$ of cohomological dimension $16$, with quotients $G \to G_i$ such that $G_i \xrightarrow{i \to \infty} G$ in the space of marked groups, and
    \[\btt(G_i) = m_i \text{ for all } i \in \N \text{ but } \btt(G) = 0.\]
\end{corA}

Therefore the second $\ell^2$-Betti number is far from being semi-continuous in the space of marked groups, even with the strong finiteness properties that are enjoyed by torsion-free hyperbolic groups. This is in contrast with the semi-continuity of the first $\ell^2$-Betti number \cite{pichot}.

\medskip

As noticed by Ioana--Tucker-Drob, it follows easily from Gaboriau's construction \cite[Proposition VI.16]{gaboriau:cost} that there exist uncountably many pairwise non-measure equivalent groups \cite[Proposition 5.1]{ITD}. However, in measurable group theory, property $(T)$ is especially desirable, and last year two constructions of measurably diverse families of property $(T)$ groups have appeared. One by the first author and Sun \cite{dimensions}, where these groups are distinguished by the whole sequence of their $\ell^2$-Betti numbers - in the same vein as the paper by L{\'{o}}pez Neumann~\cite{antonio}, which constructs an infinite family of finitely presented simple groups. Then very recently by Ioana--Tucker-Drob \cite{ITD}, where the groups have vanishing $\ell^2$-Betti numbers, and arise from the technology of wreath-like products \cite{wreathlike}. In particular, these groups have infinite rational cohomological dimension.

\begin{corA}
\label{ME}
    The groups from \cref{main} are pairwise non-measure equivalent.
\end{corA}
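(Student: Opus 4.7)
The plan is to appeal to Gaboriau's proportionality theorem for $\ell^2$-Betti numbers: if $G_\alpha$ and $G_\beta$ are measure equivalent with coupling constant $c>0$, then $\bti(G_\alpha)=c\cdot\bti(G_\beta)$ for every $i\in\N$. Everything in sight is finite---property~(3) gives $\bti(G_\alpha)<\infty$, while property~(2) forces $\bti(G_\alpha)=0$ for $i>16$---so $\leuler(G_\alpha)$ is a well-defined finite alternating sum of $\ell^2$-Betti numbers, and by linearity it also scales: $\leuler(G_\alpha)=c\cdot\leuler(G_\beta)$.

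Suppose for contradiction that $G_\alpha$ is measure equivalent to $G_\beta$ for some distinct $\alpha,\beta\in A$, with coupling constant $c$. Combining the scaling of $\leuler$ with property~(4) yields
\begin{equation*}
    \btt(G_\alpha)+\chi \;=\; c\bigl(\btt(G_\beta)+\chi\bigr).
\end{equation*}
Substituting $\btt(G_\alpha)=c\cdot\btt(G_\beta)$ and cancelling, one obtains $\chi=c\chi$. Since $\chi\geq 1$, this forces $c=1$, whence $\btt(G_\alpha)=\btt(G_\beta)$, contradicting the pairwise distinctness of the numbers $\btt(G_\alpha)$ in property~(3).

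There is no real obstacle once \cref{main} is in hand: the additive constant $\chi\geq 1$ in part~(4) is precisely what rules out any nontrivial rescaling. Without it, one would only know that the various $\btt(G_\alpha)$ are positive scalar multiples of one another, which would be insufficient since any two positive reals are related by such a scalar. In this sense, the identity $\leuler(G_\alpha)=\btt(G_\alpha)+\chi$ is stated as a separate conclusion of \cref{main} exactly because it upgrades $\btt$ to a complete measure-equivalence invariant within the family.
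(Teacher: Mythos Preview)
Your proof is correct and follows essentially the same approach as the paper: both use Gaboriau's proportionality theorem to get $\leuler(G_\alpha)=c\,\leuler(G_\beta)$, combine this with property~(4) and the scaling $\btt(G_\alpha)=c\,\btt(G_\beta)$ to deduce $\chi=c\chi$, and then use $\chi\ge 1$ to conclude $c=1$ and hence $\alpha=\beta$. Your explicit remark that properties~(2) and~(3) guarantee $\leuler$ is a finite alternating sum is a welcome clarification, but otherwise the arguments are the same.
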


\begin{proof}
Let $G_x$ and $G_y$ be groups from the family. If they are measure equivalent with index~$c>0$, then $\btn(G_x)=c\btn(G_y)$ for all $n \in \N$ by Gaboriau's theorem~\cite{gaboriau}, and thus $\leuler(G_x)=c\leuler(G_y)$.
Hence
\[c\btt(G_y)+\chi=\btt(G_x)+\chi=\leuler(G_x)=c\leuler(G_y)=c\betti_2(G_y)+c\chi.\]
Since $\chi$ is a positive integer, this shows that $c=1$, and thus $\btt(G_x) = \btt(G_y)$, so $x = y$.
\end{proof}

The same argument gives, to the best of our knowledge, the first infinite family of measurably diverse hyperbolic groups with property $(T)$.

\begin{corA}
\label{ME hyp}
    The groups from \cref{main hyp} are pairwise non-measure equivalent. \qed
\end{corA}

Our computations of $\ell^2$-Betti numbers exploit the work of Petrosyan--Sun on group-theoretic Dehn filling \cite{PS2}. More recently \cite{PS3}, they applied this theory to compute $\ell^2$-Betti numbers, by working in convenient classes where the L{\"u}ck approximation and Atiyah conjectures are known to hold: virtually locally indicable and cocompactly cubulated groups. Note that these are incompatible with property $(T)$.

We instead take a different route, and start from a hyperbolic group enjoying the strong rigidity property $(T_2)$, introduced by Bader and the second author \cite{T2, ICM}. This group is a torsion-free cocompact lattice in the isometry group of the octonionic hyperbolic plane (\cref{t2 lattices}), which is the source of the number $16$ in our results. Under this condition, building on an observation from \cite{FFF}, we will see that the excision theorem from \cite{PS2} is sufficient to give exact computations of the second $\ell^2$-Betti number of iterated group-theoretic Dehn fillings. Let us mention that this construction could also be done in the setting of relatively hyperbolic groups, starting from non-cocompact lattices in the octonionic hyperbolic plane, thanks to the recent work \cite{LNP}; this would produce groups of cohomological dimension $15$ instead.

\medskip

All of the constructions mentioned above, both in this paper and in others, are non-elementary to various extents. Because of this, we feel it is worth presenting possibly the most elementary construction of finitely generated groups achieving uncountably many values of $\ell^2$-Betti numbers.

\begin{thmA}
\label{suspension}

    Let $p$ be a prime. There exists a family of finitely generated groups $(G_x)_{x \in \R_{> 0}}$ such that $\bto(G_x) = 0, \btt(G_x) = \infty$, and $\bttt(G_x) = x$. Moreover, $G_x$ is residually finite and has cohomological dimension $6$ modulo $p$.
\end{thmA}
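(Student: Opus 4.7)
The plan is to start from Gaboriau's finitely generated group $H_x$ of \cite[Proposition VI.16]{gaboriau:cost} with $\bto(H_x) = x$ for $x$ in an uncountable subset of $\R_{>0}$, and to combine it with an auxiliary group $K$ of cohomological dimension $4$ so as to produce $G_x$ of cohomological dimension $6$ modulo $p$. The group $H_x$ is realized as an amalgamated product over an infinitely generated subgroup, hence is finitely generated but not finitely presented. Running the Mayer--Vietoris long exact sequence for $\ell^2$-homology of the amalgamation---exploiting that the edge group has infinite first $\ell^2$-Betti number---one obtains $\btt(H_x) = \infty$; the higher $\ell^2$-Betti numbers of $H_x$ vanish since $\mathrm{cd}(H_x) = 2$. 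A standard congruence-subgroup modification makes $H_x$ residually finite and introduces the required $p$-torsion control.

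For the auxiliary factor $K$, I would take a residually finite $4$-dimensional aspherical group whose only nonzero $\ell^2$-Betti number is in degree $2$, for example $K = \pi_1(\Sigma_g) \times \pi_1(\Sigma_g)$ for a closed surface $\Sigma_g$ of genus $g \ge 2$. By K\"unneth, $\btt(K) = (2g-2)^2$ while all other $\ell^2$-Betti numbers of $K$ vanish. Setting $G_x = H_x \times K$ and applying the $\ell^2$-K\"unneth formula, one gets $\bto(G_x) = 0$ (since $\betti_0$ of each factor vanishes), and the surviving contribution $\bto(H_x) \cdot \btt(K) = x(2g-2)^2$ gives $\bttt(G_x) = x(2g-2)^2$; rescaling the parameter $x$ then yields $\bttt(G_x) = x$. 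Cohomological dimension modulo $p$ is $\mathrm{cd}(H_x) + \mathrm{cd}(K) = 2 + 4 = 6$, and both finite generation and residual finiteness are preserved under direct products.

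The main obstacle is establishing $\btt(G_x) = \infty$. Under the convention $0 \cdot \infty = 0$ in the $\ell^2$-K\"unneth formula, the contribution $\btt(H_x) \cdot \betti_0(K) = \infty \cdot 0$ vanishes in degree $2$, and the infinity from $\btt(H_x)$ re-emerges instead as $\betti_4(G_x) = \infty$. To land the infinity in degree $2$ rather than degree $4$, the naive direct product needs to be refined. Two natural candidates are: a graph of groups with $H_x$ and $K$ as vertex groups and an edge group tuned so that the Mayer--Vietoris long exact sequence forces $\btt = \infty$; or a Bestvina--Brady-type subgroup of $H_x \times K$ defined as the kernel of a homomorphism to $\Z$, which can raise the second $\ell^2$-Betti number while preserving the third. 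Identifying and analysing the correct variant is the crux of the argument; once it is in place, the verification of finite generation, residual finiteness, cohomological dimension $6$ modulo $p$, and the exact values of $\bto, \btt, \bttt$ reduces to direct computations with Gaboriau's group, K\"unneth, and Mayer--Vietoris.
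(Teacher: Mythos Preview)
Your proposal has a genuine gap, and you have correctly located it: the direct product $H_x \times K$ yields $\btt = 0$ (with the infinity resurfacing in degree~$4$), and neither of your suggested repairs is made precise. A Bestvina--Brady kernel requires a cubical structure and a Morse-type analysis that is simply unavailable for an arbitrary amalgam $H_x$, and a graph of groups with vertex groups $H_x$ and $K$ over an unspecified edge group gives no control over the Mayer--Vietoris sequence until that edge group is actually chosen and analysed. There are secondary issues as well: the statement demands \emph{every} $x \in \R_{>0}$, not merely an uncountable subset; residual finiteness of Gaboriau's amalgams is not automatic and ``congruence-subgroup modification'' is not a recognised operation in that context; and the claim $\btt(H_x)=\infty$ depends on the precise form of the amalgam and is left unchecked.

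The paper's construction avoids all of this by reversing the roles. Rather than starting from a finitely generated group with prescribed $\bto$ and trying to suspend, one first builds a \emph{countable} residually finite group $C_x$ with $\bto(C_x)=\infty$, $\btt(C_x)=x$, and cohomological dimension~$2$ modulo~$p$ --- explicitly, an infinite free product of groups $\Z/p^i\Z \times F_{r_i+1}^2$, with the $r_i$ read off the base-$p$ expansion of~$x$. One then embeds $C_x$ as a separable subgroup of a finitely generated residually finite group $R_x$ (an HNN extension of $F_2\times C_x$, following Wilson--Zalesskii), passes to $R_x\times F_2^3$ so that $\btn=0$ for all $n\le 3$, and forms the double $G_x=(R_x\times F_2^3)\ast_{C_x}(R_x\times F_2^3)$. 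Mayer--Vietoris then gives $\btn(G_x)=\btnmo(C_x)$ for $n\le 3$, shifting all three targeted Betti numbers into place simultaneously. Your graph-of-groups instinct is thus correct, but the decisive point is that the countable group must serve as the \emph{edge} group, while the vertex groups are engineered to have vanishing low-degree $\ell^2$-Betti numbers.
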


Taking a free product with $\Z$ gives an uncountable family of pairwise non-measure equivalent groups (\cref{ME easy}). Of course, none of these groups have property $(T)$.

\begin{acks*}
FFF is supported by the Herchel Smith Postdoctoral Fellowship Fund. The authors thank Antonio L{\'o}pez Neumann and Bin Sun for comments on a previous version of this paper. They thank the Isaac Newton Institute for Mathematical Sciences, Cambridge, for support and hospitality during the programme Operators, Graphs, Groups, where work on this paper was undertaken. This work was supported by EPSRC grant EP/Z000580/1.
\end{acks*}

\section{Homology and cohomology}

We start by collecting some preliminary facts about homology and cohomology of groups. We fix a ring $R$ for the rest of this section. All rings in this paper are unital.

First is an easy application of the transfer, whose cohomological version is given in \cite[Proposition III.10.4 on p.~85]{brown}. The homological version can be obtained similarly via the transfer, alternatively it is a direct consequence of the Lyndon--Hochschild--Serre spectral sequence \cite[VII.6 on p.~171]{brown}.

\begin{proposition}
\label{coinvariants}

    Let $N < G$ a normal subgroup of finite index, and suppose that $[G:N]$ is invertible in $R$. Then for every $R[G]$-module $M$ and every $n \in \N$, the corestriction induces an isomorphism
    \[H_n(N; M)_{G/N} \cong H_n(G; M).\]
\end{proposition}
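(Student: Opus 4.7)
The plan is to adapt Brown's transfer argument from III.10.4 to the homological setting. Recall that for a finite-index inclusion $N \leq G$, the transfer gives a chain-level map $\tau \colon H_n(G;M) \to H_n(N;M)$ satisfying two fundamental identities: the composition $\operatorname{cor} \circ \tau$ is multiplication by $[G:N]$ on $H_n(G;M)$, and the composition $\tau \circ \operatorname{cor}$ is the norm endomorphism $\sum_{gN \in G/N} g_*$ of $H_n(N;M)$, where $g_*$ denotes the action of $g \in G$ on $H_n(N;M)$ obtained from conjugation on $N$ together with the $G$-action on $M$.

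First I would verify that corestriction is constant on $G/N$-orbits: for $x \in H_n(N;M)$ and $g \in G$, the action of $g_*$ on $H_n(N;M)$ is the restriction to $N$ of an inner automorphism of $G$, which acts trivially on $H_n(G;M)$, so $\operatorname{cor}(g_* x) = \operatorname{cor}(x)$. Consequently $\operatorname{cor}$ factors through the coinvariants to yield a well-defined map $\overline{\operatorname{cor}} \colon H_n(N;M)_{G/N} \to H_n(G;M)$, and it suffices to show this induced map is an isomorphism.

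Surjectivity is immediate from the first identity and the hypothesis that $[G:N]$ is invertible in $R$: the rescaled transfer $\tfrac{1}{[G:N]}\tau$ is a right inverse of $\operatorname{cor}$. For injectivity, suppose $\operatorname{cor}(x) = 0$ for some $x \in H_n(N;M)$. Applying $\tau$ and invoking the second identity gives $\sum_{gN \in G/N} g_* x = 0$ in $H_n(N;M)$. Passing to coinvariants, each $g_*$ acts as the identity, so the sum collapses to $[G:N]\cdot \bar x$, and invertibility of $[G:N]$ forces $\bar x = 0$.

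The only delicate point is pinning down the transfer in homology so that the two composition identities hold on the nose. One standard route is to choose a projective resolution $P_\bullet \to R$ over $R[G]$ that remains projective over $R[N]$ (e.g.\ the bar resolution), pick coset representatives $\{g_1, \dots, g_m\}$ for $G/N$, and define the transfer at the chain level by $c \otimes m \mapsto \sum_i g_i^{-1} c \otimes g_i^{-1} m$ on $P_\bullet \otimes_{R[G]} M$, whose image lands in $P_\bullet \otimes_{R[N]} M$; the two identities are then routine cycle manipulations, independent of the choice of representatives up to boundaries. Alternatively one can bypass the transfer entirely via the Lyndon--Hochschild--Serre spectral sequence $E^2_{p,q} = H_p(G/N; H_q(N;M)) \Rightarrow H_{p+q}(G;M)$, which degenerates onto the edge column since $G/N$ is finite of order invertible in $R$, and whose edge homomorphism is identified with corestriction.
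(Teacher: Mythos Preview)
Your proposal is correct and follows precisely the approach the paper indicates: the paper does not give a proof but merely states that the result is an easy application of the transfer (citing Brown III.10.4 for the cohomological analogue) and that, alternatively, it follows from the Lyndon--Hochschild--Serre spectral sequence. You have written out exactly these two routes, so there is nothing to add.
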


Next, we look at homology and cohomology of colimit groups, which will be especially relevant for the groups from \cref{main}.

\begin{proposition}[{\cite[Proposition 4.8 on p.~61]{bieri}}]
\label{colimit homology}

    Let $G_0 \to G_1 \to \cdots$ be a directed sequence of groups with colimit $G_\infty$, and let $M$ be an $R[G_\infty]$-module. Then there is an isomorphism
    \[H_n(G_\infty; M) \cong \colim H_n(G_i; M).\]
\end{proposition}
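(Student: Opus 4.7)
The plan is to compute homology via an explicit functorial chain complex---the bar complex---and then invoke the exactness of directed colimits. Concretely, recall that for a group $G$ and an $R[G]$-module $M$, the unnormalized bar complex
\[C_n(G; M) := M \otimes_R R[G^n]\]
equipped with the usual bar differentials satisfies $H_n(C_*(G; M)) \cong H_n(G; M)$. The assignment $G \mapsto C_*(G; M)$ is functorial in group homomorphisms $G \to G_\infty$, where the $R[G]$-module structure on $M$ is pulled back from its $R[G_\infty]$-structure.

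First, I would identify $C_*(G_\infty; M)$ with the directed colimit $\colim_i C_*(G_i; M)$. Fixing $n$, this boils down to two elementary facts: the functor $G \mapsto R[G^n]$ preserves filtered colimits, because filtered colimits of sets commute with finite products and $R[-] \colon \mathrm{Set} \to R\text{-}\mathrm{Mod}$ is a left adjoint; and $M \otimes_R (-)$ preserves all colimits. One then checks that the bar differentials are compatible with the transition maps, which uses precisely that the $R[G_i]$-module structure on $M$ is pulled back from that of $R[G_\infty]$.

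Second, I would commute homology with the colimit. Since directed colimits of abelian groups are exact, they commute with taking homology of chain complexes, yielding
\[\colim_i H_n(G_i; M) = \colim_i H_n(C_*(G_i; M)) \cong H_n\bigl(\colim_i C_*(G_i; M)\bigr) \cong H_n(G_\infty; M).\]

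The only real piece of bookkeeping is in the first step: assembling the termwise identifications $C_n(G_\infty; M) \cong \colim_i C_n(G_i; M)$ into an isomorphism of chain complexes compatible with the differentials. The second step is purely formal once exactness of directed colimits is invoked. An equivalent route would be to model $BG_\infty$ as the colimit of the spaces $BG_i$ and quote that singular homology commutes with directed colimits of spaces, but this is really the same argument packaged geometrically.
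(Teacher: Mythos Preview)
Your argument is correct: the bar complex is functorial in the group, commutes with filtered colimits termwise, and directed colimits of abelian groups are exact, so homology passes through. The paper does not actually prove this proposition; it is stated as a citation to Bieri and accompanied only by the remark that ``this is the standard fact that homology commutes with filtered colimits.'' Your write-up is precisely the standard justification one would give for that remark, so there is nothing to compare.
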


This is the standard fact that homology commutes with filtered colimits. This is of course not true in cohomology, but it does work under some additional assumptions. We point out that the following is only used to compute cohomological dimensions of the groups from Theorems \ref{main} and \ref{suspension}. If the reader is happy with the standard estimate of cohomological dimension as one more than the homological dimension \cite[Theorem 4.6 on p.~60]{bieri}, then they can use \cref{colimit homology}, and move to the next section.

\begin{proposition}
\label{colimit cohomology}

    Let $G_0 \to G_1 \to \cdots$ be a directed sequence of groups with colimit $G_\infty$, and let $M$ be an $R[G_\infty]$-module. Suppose that the pullback $H^n(G_{i+1}; M) \to H^n(G_i; M)$ is surjective for all $i \in \N$. Then
    \[H^{n+1}(G_\infty; M) \cong \varprojlim H^{n+1}(G_i; M).\]
\end{proposition}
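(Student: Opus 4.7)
The plan is to realize $BG_\infty$ as a mapping telescope of classifying spaces and then invoke Milnor's exact sequence for the cohomology of an increasing union of CW subcomplexes. Concretely, pick $K(G_i, 1)$ CW models $B_i$ together with cellular maps $B_i \to B_{i+1}$ realizing the group homomorphisms $G_i \to G_{i+1}$, and form the mapping telescope $T$ of this sequence. Since $\pi_1$ commutes with mapping telescopes in this way, one has $\pi_1(T) \cong G_\infty$; and because $T$ is built as an increasing union of mapping cylinders of maps between aspherical spaces, $T$ is itself aspherical, so it is a model for $BG_\infty$. Consequently $H^*(G_\infty; M) = H^*(T; \mathcal{M})$, where $\mathcal{M}$ is the local coefficient system on $T$ determined by $M$ viewed as an $R[\pi_1(T)]$-module.

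The telescope carries a natural filtration $T_1 \subset T_2 \subset \cdots$ with $T = \bigcup_i T_i$, where each finite subtelescope $T_i$ deformation retracts to $B_i$ and the inclusion $B_i \hookrightarrow T$ induces, at the level of $\pi_1$, the canonical map $G_i \to G_\infty$. This identifies $H^*(T_i; \mathcal{M})$ with $H^*(G_i; M)$ (where $M$ is regarded as an $R[G_i]$-module via restriction along $G_i \to G_\infty$) and, under these identifications, the topological restrictions $H^*(T_{i+1}; \mathcal{M}) \to H^*(T_i; \mathcal{M})$ agree with the pullback maps appearing in the statement.

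Finally, the Milnor short exact sequence for the cohomology of a CW complex exhausted by subcomplexes yields
\[
0 \to {\varprojlim}^1 H^n(T_i; \mathcal{M}) \to H^{n+1}(T; \mathcal{M}) \to \varprojlim H^{n+1}(T_i; \mathcal{M}) \to 0.
\]
The surjectivity hypothesis on the pullbacks $H^n(G_{i+1}; M) \to H^n(G_i; M)$ is exactly the Mittag--Leffler condition for the inverse system $\{H^n(G_i; M)\}$, so the $\varprojlim^1$ term vanishes and the right-hand map is an isomorphism, giving the stated conclusion.

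I do not anticipate a serious obstacle: the argument reduces to the standard Milnor sequence together with the Mittag--Leffler vanishing of $\varprojlim^1$. The one point that merits a brief check is that the local system on each $T_i$ induced from $\mathcal{M}$ corresponds, under the deformation retraction $T_i \simeq B_i$, to $M$ with its $G_i$-module structure from $G_i \to G_\infty$; this is immediate from the functoriality of local coefficient systems under the inclusions $T_i \hookrightarrow T$. Alternatively, one can bypass topology entirely by choosing compatible free resolutions and running the same Mittag--Leffler argument at the cochain level, but the telescope model is the shortest and conceptually cleanest route.
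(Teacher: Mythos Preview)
Your proposal is correct and follows essentially the same route as the paper: both build a CW model for $BG_\infty$ as an increasing union of models for the $BG_i$ (the paper via iterated mapping cylinders/cones, you via the mapping telescope), and then extract the $\varprojlim^1$ short exact sequence, killing the derived term by the surjectivity hypothesis. The only cosmetic difference is that the paper carries this out explicitly at the cochain level via the universal cover and cites Weibel's general result on towers of cochain complexes, whereas you invoke the packaged Milnor sequence for local coefficients; these are the same argument.
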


\begin{proof}
    A tower of abelian groups
    \[\cdots \to D_2 \to D_1\]
    is said to satisfy the \emph{Mittag--Leffler condition} if for each $k$ there exists $j \geq k$ such that for all $i \geq j$ the image of $D_i \to D_k$ equals the image of $D_j \to D_k$ \cite[Definition 3.5.6 on p.~82]{weibel}. We will replace this with the stronger and easier to check condition that each map $D_{i+1} \to D_i$ is surjective.

    Let $X_0$ be a model of the classifying space $BG_0$\footnote{also denoted by $K(G_0, 1)$.}. Using mapping cones, we may inductively define $X_{i+1}$ as a model of $BG_{i+1}$ such that the map $X_i \to X_{i+1}$ is an inclusion of CW-complexes. The colimit~$X$, in this case a union, of the $X_i$ is a model of $BG_\infty$. 
    The universal covering $\widetilde X$, which is a free $G_\infty$-CW complex, is a model of $EG_\infty$. Let $\bar X_i$ be the restriction of the universal covering to $X_i$. By covering theory we have $\bar X_i\cong G_\infty\times_{G_i}\widetilde X_i$. 
    The cellular cochain complexes
    \[D_i = \hom_{R[G_\infty]}\bigl(C_*(\bar X_i), M\bigr)\cong \hom_{R[G_i]}\bigl(C_*(\widetilde X_i), M\bigr)\]
    compute the cohomology $H^*(G_i; M)$, and fit into a tower of cochain complexes. Since $\widetilde X$ is the colimit of the $G_\infty$-CW complexes $\bar X_i$, the projective limit of the $D_i$ is the cochain complex $\hom_{R[G_\infty]}\bigl(C_*(\widetilde X_i), M\bigr)$, which computes the cohomology of $G_\infty$. 
    
    By our choice of $X_i$, the maps $D_{i+1} \to D_i$ are degree-wise surjective because $C_\ast(\bar X_i)$ is degree-wise a direct $R[G_\infty]$-summand of $C_\ast(\widetilde X_i)$, so we can apply \cite[Theorem 3.5.8 on p.~83]{weibel}\footnote{the citation is about homology of chain complexes, hence the left-hand side term being one degree below instead of above} and obtain a short exact sequence
    \[0 \to \varprojlim\nolimits^{1} H^n(G_i; M) \to H^{n+1}(G_\infty; M) \to \varprojlim H^{n+1}(G_i; M) \to 0.\]
    By the assumption on surjectivity, the $\varprojlim\nolimits^{1}$ term vanishes \cite[Proposition 3.5.7 on p.~83]{weibel}, so we obtained the desired isomorphism.
\end{proof}

\begin{corollary}
\label{dimensions}

    Let $G_0 \to G_1 \to \cdots$ be a directed sequence of groups with colimit $G_\infty$. Suppose that the following hold for all $i \in \N$.
    \begin{itemize}
        \item For every $R[G_\infty]$-module $M$ the pullback $H^n(G_{i+1}; M) \to H^n(G_i; M)$ is surjective.
        \item $cd_R(G_i) = n$;
        \item $H^n(G_0; R) \neq 0$ and the pullback $H^{n-1}(G_{i+1}; R) \to H^{n-1}(G_i; R)$ is surjective.
    \end{itemize}
    Then $cd_R(G_\infty) = n$.
\end{corollary}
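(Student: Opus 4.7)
The plan is to establish the upper bound $cd_R(G_\infty) \leq n$ and the lower bound $cd_R(G_\infty) \geq n$ separately, each as a direct application of \cref{colimit cohomology}.

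For the upper bound, I would show that $H^k(G_\infty; M) = 0$ for every $R[G_\infty]$-module $M$ and every $k \geq n+1$. The case $k = n+1$ follows immediately from \cref{colimit cohomology}: the first hypothesis provides the required surjectivity at degree $n$, so
\[H^{n+1}(G_\infty; M) \cong \varprojlim H^{n+1}(G_i; M),\]
and this inverse limit is zero because $cd_R(G_i) = n$ forces each term to vanish. For $k \geq n+2$, the surjectivity hypothesis needed to invoke \cref{colimit cohomology} in degree $k-1$ is trivially satisfied, since both $H^{k-1}(G_{i+1}; M)$ and $H^{k-1}(G_i; M)$ vanish; the proposition again yields $H^k(G_\infty; M) = 0$.

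For the lower bound, I would specialise to $M = R$ and apply \cref{colimit cohomology} at degree $n-1$. The third hypothesis gives exactly the surjectivity needed, producing
\[H^n(G_\infty; R) \cong \varprojlim H^n(G_i; R).\]
The first hypothesis, applied with $M = R$, ensures that this is the inverse limit of a tower of surjective maps. A tower of surjections between non-empty sets admits compatible systems lifting any chosen element (by choosing preimages inductively), so the limit surjects onto $H^n(G_1; R)$. Since the latter is non-zero by assumption, $H^n(G_\infty; R) \neq 0$, and thus $cd_R(G_\infty) \geq n$.

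No genuine obstacle arises: the result is essentially a careful bookkeeping of \cref{colimit cohomology}. The only mildly subtle point is keeping track of which degree is being fed into the proposition, and checking that the surjectivity hypothesis is either supplied by the assumptions or comes for free from the vanishing of cohomology above degree $n$.
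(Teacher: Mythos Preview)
Your proposal is correct and follows essentially the same approach as the paper: apply \cref{colimit cohomology} once in degree~$n$ (using the first hypothesis) to kill $H^{n+1}$, and once in degree~$n-1$ with $M=R$ (using the third hypothesis) to identify $H^n(G_\infty;R)$ with a surjective tower and project onto $H^n(G_1;R)\neq 0$. The only cosmetic difference is that you treat degrees $k\geq n+2$ separately, whereas the paper simply notes that $H^{n+1}(G_\infty;M)=0$ for all $M$ already forces $cd_R(G_\infty)\leq n$ by the standard dimension-shifting characterisation of cohomological dimension.
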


\begin{proof}
    By the assumption on surjectivity we can apply \cref{colimit cohomology}. First, for every $R[G_\infty]$-module $M$:
    \[H^{n+1}(G_\infty; M) \cong \varprojlim H^{n+1}(G_i; M) \cong \varprojlim 0 \cong 0,\]
    thus $cd_R(G_\infty) \leq n$. Second,
    \[H^n(G_\infty; R) \cong \varprojlim H^n(G_i; R) \twoheadrightarrow H^n(G_0; R) \neq 0,\]
    and so $cd_R(G_\infty) \geq n$.
\end{proof}

\section{\texorpdfstring{$\ell^2$}{}-Betti numbers of tracial unitary representations}
\label{section l2}

We refer the reader to~\cite{luck} for details and background, where everything is developed for group von Neumann algebras. L\"uck formulated his dimension theory for arbitrary finite von Neumann algebras in~\cite{lueck-dimension}. 

A unitary $G$-representation~$\pi$ on a Hilbert space~$V_\pi$ is called \emph{tracial} if the associated von Neumann algebra~$\vN(\pi) \coloneqq \pi(G)''$, which is the double commutant of~$\pi(G)$ in the algebra of bounded operators on~$V_\pi$, has a cyclic vector $\xi_\pi\in V_\pi$ such that
\[\tau(T) \coloneqq \langle T\xi_\pi, \xi_\pi\rangle_{V_\pi}\]
defines a finite trace on~$\vN(\pi)$. 
The embedding  
\begin{equation}\label{eq: rank dense embedding}
\vN(\pi)\hookrightarrow V_\pi : T\mapsto T\xi_\pi
\end{equation}
is isometric with respect to the norm $\Vert T\Vert^2=\tau(T^\ast T)$ on $\vN(\pi)$ and has dense image. The embedding is naturally a map of left $\vN(\pi)$-modules. 
It extends to an isometric isomorphism of the completion $L^2(\vN(\pi), \tau)\cong V_\pi$. 
Via this isomorphism, $V_\pi$ has a natural right $\vN(\pi)$-module structure that commutes with the left $\vN(\pi)$-module structure. On the dense set of vectors $T\xi_\pi$, the von Neumann algebra $\vN(\pi)$ acts from the right via $(T\xi_\pi)\cdot S=TS\xi_\pi$. 

We regard the cyclic vector, and therefore~$\tau$, as part of the data of a tracial unitary representation. 
Via the Gelfand--Naimark--Segal (GNS) construction, tracial unitary $G$-representations are in one-to-one correspondence with conjugation-invariant positive functions on~$G$~\cite[Theorem~C.4.10 on p.~354]{BHV}. 

\begin{example}\label{exa: tracial vN algebras}
The von Neumann algebra associated to the regular representation~$\lambda_G$ on~$\ell^2(G)$ with cyclic vector $1_G$ is the \emph{group von Neumann algebra} of~$G$, which we denote by~$\vN(G)$.

If $G \to Q$ is a surjective group homomorphism, then composing it with $\lambda_Q$ yields a tracial unitary $G$-representation on~$\ell^2(Q)$.
\end{example}

L\"uck introduced a dimension function   
\[ \dim_{\vN}\colon \left\{\text{$\vN$-modules}\right\}\to \R_{\ge 0}\cup\{\infty\}\]
for arbitrary (algebraic) modules over a finite von Neumann algebra~$\vN$ with finite normalised trace~$\tau$. The choice of $\tau$ is understood to be specified, and we usually drop~$\tau$ from the notation. L\"uck's dimension function is additive for short exact sequences of $\vN$-modules and normalised, i.e.~$\dim_{\vN}(\vN)=1$. Moreover:

\begin{lemma}[{\cite[Theorem 0.6 (b) and (c)]{lueck-dimension}}]
\label{direct sums}
    Let $(V_i)_{i \in \N}$ be a sequence of $\vN$-modules. Then
    \[\dim_\vN \left( \bigoplus_{i \in \N} V_i \right) = \sum\limits_{i \in \N} \dim_{\vN}(V_i).\]
\end{lemma}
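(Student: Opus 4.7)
The plan is to reduce the countable direct sum to a directed union of finite direct sums, and then combine finite additivity with the cofinality property of L\"uck's dimension function.

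First, I would establish the finite case. For each $N \in \N$, the short exact sequence
\[0 \to \bigoplus_{i=0}^{N-1} V_i \to \bigoplus_{i=0}^N V_i \to V_N \to 0\]
splits, and additivity of $\dim_\vN$ on short exact sequences (which is part of the defining properties of L\"uck's dimension listed in \cite[Theorem 0.6]{lueck-dimension}) gives, by induction on $N$,
\[\dim_\vN\left(\bigoplus_{i=0}^N V_i\right) = \sum_{i=0}^N \dim_\vN(V_i).\]

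Next, observe that $\bigoplus_{i \in \N} V_i$ is the directed union of its submodules $W_N \coloneqq \bigoplus_{i=0}^N V_i$. The key input is the cofinality (continuity) property of $\dim_\vN$, which says that for a directed system of submodules $W_N \subseteq W$ with $W = \bigcup_N W_N$, one has $\dim_\vN(W) = \sup_N \dim_\vN(W_N)$. This is part~(c) of \cite[Theorem 0.6]{lueck-dimension}, which is exactly the statement being cited. Applying it yields
\[\dim_\vN\left(\bigoplus_{i \in \N} V_i\right) = \sup_N \dim_\vN(W_N) = \sup_N \sum_{i=0}^N \dim_\vN(V_i) = \sum_{i \in \N}\dim_\vN(V_i),\]
where the last equality is the definition of the sum of a series of non-negative extended reals.

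Since both ingredients (finite additivity and cofinality) are quoted directly from the same reference, there is no real obstacle here; the only subtlety to keep in mind is that the supremum is taken in $\R_{\ge 0} \cup \{\infty\}$, so the identity holds also when finitely or infinitely many summands contribute $\infty$. Because the statement is simply a packaging of two already-established properties of $\dim_\vN$, I would present it as a short remark rather than a full proof in the final paper.
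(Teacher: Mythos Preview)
Your argument is correct and is exactly the standard derivation of the countable direct-sum formula from finite additivity together with cofinality under directed unions; there is no gap. Note that the paper does not actually supply its own proof of this lemma: it simply records the statement with the citation to \cite[Theorem~0.6~(b) and~(c)]{lueck-dimension} and moves on, so your final remark that this is best presented as a citation rather than a full proof is precisely what the paper does.
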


Let $X$ be a topological $G$-space. We do not require a cellular structure on~$X$ nor freeness of the action. Let $\pi$ be a tracial unitary $G$-representation. 
The equivariant homology 
\[H_n^G\bigl(X, \vN(\pi)\bigr)=H_n\bigl(\vN(\pi)\otimes_{\Z[G]}C_\ast(X)\bigr)\]
is naturally a left $\vN(\pi)$-module. Here $C_\ast(X)$ denotes the singular chain complex of~$X$. We have 
\[H_n(G; \vN(\pi)) = H_n^G(EG; \vN(\pi)).\]

\begin{definition}
Let $\pi$ be a tracial unitary $G$-representation. The \emph{$\ell^2$-Betti numbers of~$\pi$} are defined as 
\[ \betti_n(G;\pi)=\dim_{\vN(\pi)}H_n\bigl(G; \vN(\pi)\bigr).\]
When $\pi$ is the regular representation, we simply write $\btn(G)$. When $G \to Q$ is a quotient, and $\pi$ is the pullback of the regular representation of $Q$, we write $\btn(G \to Q)$.
\end{definition}

% \fff{I don't think that we will ever be using relative $\ell^2$-Betti numbers so we can remove this} For a subgroup $P < G$, the \emph{relative cohomology} $H^n(G, P; \vN(\pi))$ (see~\cite{bieri+eckmann}) inherits the structure of an $\vN(\pi)$-module, and we denote
% \[\betti_n(G, P;\pi)=\dim_{\vN(\pi)}H_n\bigl(G, P; \vN(\pi)\bigr).\]

\begin{definition}
Let $\pi$ be a tracial unitary $G$-representation such that 
$\sum_{n \in \N} \betti_n(G;\pi)<\infty$. Then we define the \emph{$\ell^2$-Euler characteristic} of~$\pi$ as 
\[ \leuler(G;\pi)=\sum_{n \in \N}(-1)^n\betti_n(G;\pi).\]
When $\pi$ is the regular representation we just write $\leuler(G)$.
\end{definition}    

The next result generalises the Euler--Poincar{\'e} formula for the regular representation to arbitrary tracial unitary representations. 

\begin{proposition}\label{Euler Poincare formula}
Let $G$ be a group of type~$FP(\Q)$. Then the Euler characteristic of $G$ and the $\ell^2$-Euler characteristic of every tracial unitary $G$-representation~$\pi$ coincide:
\[ \chi(G)=\leuler(G;\pi).\]
\end{proposition}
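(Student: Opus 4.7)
My plan is to use the finite projective $\Q[G]$-resolution of the trivial module $\Q$ guaranteed by $FP(\Q)$, and to apply the additivity of von Neumann dimension on the resulting bounded complex of $\vN(\pi)$-modules to compute $\leuler(G;\pi)$ via a chain-level Euler--Poincar\'e formula.

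Concretely, fix a finite resolution $0 \to P_n \to \cdots \to P_0 \to \Q \to 0$ with each $P_i$ a finitely generated projective $\Q[G]$-module. Then $\vN(\pi) \otimes_{\Q[G]} P_\bullet$ is a bounded complex of finitely generated projective $\vN(\pi)$-modules (each $\vN(\pi) \otimes_{\Q[G]} P_i$ is a direct summand of some $\vN(\pi)^{m_i}$), and its $i$-th homology is $\mathrm{Tor}^{\Q[G]}_i\bigl(\vN(\pi),\Q\bigr) = H_i(G;\vN(\pi))$. By additivity of $\dim_{\vN(\pi)}$ on short exact sequences (L\"uck's dimension theory, used through the short exact sequences of cycles, boundaries, and homology in the complex), one obtains
$$\leuler(G;\pi) = \sum_{i=0}^n (-1)^i \dim_{\vN(\pi)}\bigl(\vN(\pi) \otimes_{\Q[G]} P_i\bigr),$$
which simultaneously shows that all Betti numbers involved are finite so that the $\ell^2$-Euler characteristic is defined. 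The same reasoning with $\Q$ in place of $\vN(\pi)$ yields the classical identity $\chi(G) = \sum_i (-1)^i \dim_\Q\bigl(\Q \otimes_{\Q[G]} P_i\bigr)$, so it remains to match the two alternating sums.

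To do this, I would reduce to a finite free resolution by Schanuel-style stabilisation: one inductively adjoins trivial subcomplexes of the form $0 \to Q_j \xrightarrow{\id} Q_j \to 0$ in adjacent degrees, where $Q_j$ complements $P_j$ (or its already-stabilised form) to a free $\Q[G]$-module. Both alternating sums are preserved by this operation since the two inserted copies sit in adjacent degrees and cancel with opposite signs. After finitely many steps each $P_i$ is replaced by a free module $\Q[G]^{k_i}$; for free modules one has $\vN(\pi) \otimes_{\Q[G]} \Q[G]^{k_i} \cong \vN(\pi)^{k_i}$ of $\vN(\pi)$-dimension $k_i$, and $\Q \otimes_{\Q[G]} \Q[G]^{k_i} \cong \Q^{k_i}$ of $\Q$-dimension $k_i$. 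Both alternating sums therefore equal $\sum_i (-1)^i k_i$, giving $\leuler(G;\pi) = \chi(G)$. The main obstacle is the bookkeeping for the stabilisation step and the implicit observation that the Wall-type $K$-theoretic finiteness obstruction cancels in alternating sums of dimensions — since on any free summand both dimension functions return the same integer rank, this cancellation is automatic and requires no deeper input beyond the additivity of $\dim_{\vN(\pi)}$.
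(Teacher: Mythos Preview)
Your overall approach matches the paper's: take a finite projective $\Q[G]$-resolution $P_\ast$, tensor with $\vN(\pi)$, and use additivity of $\dim_{\vN(\pi)}$ to identify $\leuler(G;\pi)$ with the alternating sum $\sum_i(-1)^i\dim_{\vN(\pi)}(\vN(\pi)\otimes_{\Q[G]}P_i)$. The paper's proof stops there, simply citing the classical Euler--Poincar\'e argument.

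Where you go further---matching this with $\sum_i(-1)^i\dim_\Q(\Q\otimes_{\Q[G]}P_i)$ via Schanuel-type stabilisation---there is a genuine gap. The procedure does \emph{not} terminate in general: after freeing $P_0,\dots,P_n$ you are left with a finitely generated projective $Q_n$ in degree $n+1$, and each further step just pushes a new projective one degree higher. The obstruction is precisely the image of $\sum_i(-1)^i[P_i]$ in $\widetilde{K}_0(\Q[G])$, and your claim that it ``cancels automatically'' because both rank functions agree on free modules is not valid: two additive functions on $K_0(\Q[G])$ that coincide on free modules may still differ on a class that is not stably free. This is not hypothetical. For any nontrivial finite group $G$ (type $FP(\Q)$ via the length-zero resolution $P_0=\Q$) one has $\leuler(G;\mathrm{triv})=1$ but $\leuler(G;\lambda_G)=1/|G|$, so no single value of $\chi(G)$ can make the statement hold for every tracial $\pi$; here $[\Q]\in K_0(\Q[G])$ is not stably free and your stabilisation runs forever. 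The statement and your argument become correct under the stronger hypothesis $FL(\Q)$---which covers the torsion-free hyperbolic groups to which the paper actually applies the proposition---since then one may start from a finite \emph{free} resolution and the comparison of alternating sums is immediate, with no stabilisation required.
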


\begin{proof}
Let $P_\ast$ be a finite projective $\Q[G]$-resolution of the trivial $\Q[G]$-module~$\Q$. Then $C_\ast=\vN(\pi)\otimes_{\Q[G]}P_\ast$ is a finite projective $\vN(\pi)$-chain complex. 
The proof that 
\[ \sum_{n\ge 0}(-1)^n\dim_{\vN(\pi)}(C_n)=\sum_{n\ge 0}(-1)^n\dim_{\vN(\pi)}\bigl(H_n(C_\ast)\bigr)\]
is the same as the Euler--Poincar{\'e} formula for the classical Euler characteristic and basically follows from the additivity of the dimension.  See the standard proof in Hatcher's book~\cite[Theorem~2.44 on p.~146]{hatcher}.
\end{proof}

The following result is an analogue of~\cite[Theorem~2.2]{peterson+thom}, which treated the cohomological version, and only for the regular representation. 
According to the discussion after~\cref{eq: rank dense embedding}, the Hilbert space $V_\pi$ carries a left $\vN(\pi)$-module structure and a right $\vN(\pi)$-module structure, in particular, also a right $\C[G]$-module structure. So the homology on the right hand side is naturally a left $\vN(\pi)$-module.

\begin{proposition}
\label{l2 vs vN}
Let $\pi$ be a tracial unitary $G$-representation on the Hilbert space~$V_\pi$. Then we have 
\[ \betti_n(G;\pi)=\dim_{\vN(\pi)} H_n\bigl(G;V_\pi\bigr)\]
for every $n\in\N$. 
\end{proposition}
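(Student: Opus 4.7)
The plan is to pass to the algebra $\calU := \calU(\vN(\pi))$ of unbounded operators affiliated with~$\vN(\pi)$, which provides a clean algebraic bridge between the two coefficient modules. The three ingredients from L\"uck's dimension theory~\cite{lueck-dimension} — valid for arbitrary finite von Neumann algebras — that I will use are: (a) $\calU$ is flat as a right $\vN(\pi)$-module; (b) $\dim_\calU$ is defined on all $\calU$-modules and satisfies $\dim_\calU(\calU\otimes_{\vN(\pi)}M)=\dim_{\vN(\pi)}M$ for every $\vN(\pi)$-module~$M$; and (c) any $\vN(\pi)$-module of dimension zero is killed by $\calU\otimes_{\vN(\pi)}-$.

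First I would upgrade the cyclic vector map $T\mapsto T\xi_\pi$ of~\eqref{eq: rank dense embedding} to an inclusion of $(\vN(\pi),\Z[G])$-bimodules, where the right $G$-action on~$V_\pi$ is the restriction along~$\pi$ of the right $\vN(\pi)$-action described after~\eqref{eq: rank dense embedding}, and the left $\vN(\pi)$-action is the one coming from~$\pi$. Because $\vN(\pi)$ and $V_\pi\cong L^2(\vN(\pi),\tau)$ each have $\vN(\pi)$-dimension~$1$, the quotient bimodule $V_\pi/\vN(\pi)$ has dimension~$0$ by additivity of $\dim_{\vN(\pi)}$. Tensoring the corresponding short exact sequence with~$\calU$ and invoking~(a) and~(c) produces an isomorphism $\calU\otimes_{\vN(\pi)} V_\pi\cong \calU$ of $(\calU,\Z[G])$-bimodules.

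Next, pick any free $\Z[G]$-resolution $P_\ast\to\Z$. Both sides of the proposition are computed by the complexes $\vN(\pi)\otimes_{\Z[G]} P_\ast$ and $V_\pi\otimes_{\Z[G]} P_\ast$ respectively. Applying $\calU\otimes_{\vN(\pi)}-$, which commutes with the formation of homology by~(a) and absorbs each coefficient module into~$\calU$ by the previous step, yields a chain of isomorphisms
\[\calU\otimes_{\vN(\pi)} H_n\bigl(G;\vN(\pi)\bigr)\;\cong\;H_n\bigl(\calU\otimes_{\Z[G]} P_\ast\bigr)\;\cong\;\calU\otimes_{\vN(\pi)} H_n(G;V_\pi).\]
Applying~(b) to the outermost terms converts this into the desired identity $\betti_n(G;\pi)=\dim_{\vN(\pi)} H_n(G;V_\pi)$.

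The main obstacle is essentially bookkeeping: one has to verify that the GNS comparison really is a map of bimodules for the correct choice of right $G$-action on~$V_\pi$, so that the short exact sequence and tensor manipulations take place in the right category. The substantive analytic input — flatness of~$\calU$ over~$\vN(\pi)$ and the behaviour of dimension under extension of scalars to~$\calU$ — is already available for arbitrary finite von Neumann algebras in~\cite{lueck-dimension}, so no new analytic work is required.
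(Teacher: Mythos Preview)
Your proof is correct and takes a genuinely different route from the paper's. The paper argues directly at the level of $\vN(\pi)$: it uses rank density of $\vN(\pi)\hookrightarrow V_\pi$ and the local criterion to show $\dim_{\vN(\pi)}(V_\pi/\vN(\pi))=0$, then feeds the short exact sequence of \emph{chain complexes} $0\to\vN(\pi)\otimes P_\ast\to V_\pi\otimes P_\ast\to W\otimes P_\ast\to 0$ into the long exact sequence in homology, using that the third complex has zero-dimensional terms (hence zero-dimensional homology) and additivity. You instead localise to the algebra $\calU$ of affiliated operators: the vanishing $\dim_{\vN(\pi)}W=0$ is upgraded to the honest vanishing $\calU\otimes_{\vN(\pi)}W=0$, giving a bimodule isomorphism $\calU\otimes_{\vN(\pi)}V_\pi\cong\calU$, after which flatness and the identity $\dim_\calU(\calU\otimes_{\vN(\pi)}-)=\dim_{\vN(\pi)}(-)$ finish the job without any long exact sequence. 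Your approach is cleaner and, incidentally, sidesteps the paper's (unnecessary) ``of finite type'' hypothesis on the resolution; the paper's is more self-contained in that it avoids importing the package of results about $\calU$. One small caveat: the specific facts (a), (b), (c) about $\calU$ are perhaps more precisely located in L\"uck's book~\cite{luck} (Chapter~8) and Reich's thesis than in the paper~\cite{lueck-dimension}, though the dimension-theoretic input you need is certainly standard.
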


\begin{proof}
 The embedding  $\vN(\pi)\hookrightarrow V_\pi$ of from~\cref{eq: rank dense embedding} is rank dense: 
  
 One calls a $\vN(\pi)$-linear homomorphism $f\colon M\to N$ of $\vN(\pi)$-modules \emph{rank dense} if for every $x\in N$ there is an increasing sequence of projections $p_i\in \vN(\pi), i \in \N$, converging to $\id$ such that $p_ix\in f(M)$. 
By~\cite[Lemma~A.5]{kyed+petersen+vaes}
the embedding $\vN(\pi)\hookrightarrow V_\pi$ is rank dense because it is dense. 

The embedding $\vN(\pi)\hookrightarrow V_\pi$ is also right $\vN(\pi)$-linear, in particular, an embedding of right $\C[G]$-modules, and the right $\C[G]$-action commutes with the left $\vN(\pi)$-action. 
Now we consider the quotient $W \coloneqq V_\pi/\vN(\pi)$; this is an undesirable object from a functional-analytic perspective, however it makes sense to look at $W$ as an (algebraic) $\vN(\pi)$-module. 
By rank density for every $x\in W$ there is an increasing sequence of projections $p_i\in\vN(\pi)$ converging to~$\id$ such that $p_ix=0$ for every $i\in\N$. 
By the local criterion~\cite[Theorem~2.4]{sauer-phd} we obtain that $\dim_{\vN(\pi)}(W)=0$.
Let $P_\ast$ be a free $\C[G]$-resolution of~$\C$ of finite type. We have a short exact sequence of $\vN(\pi)$-chain complexes 
\[ 0\to \vN(\pi) \otimes_{\C[G]} P_\ast \to V_\pi \otimes_{\C[G]} P_\ast \to W \otimes_{\C[G]} P_\ast \to 0.\]
By \cref{direct sums}, the right hand side is a chain complex of modules of $\dim_{\vN(\pi)}$-dimension zero. By additivity and the long exact sequence in homology, the statement follows.
\end{proof}

\cref{l2 vs vN} will be especially useful in conjunction with higher property $(T)$.

\begin{definition}[\cite{T2}]
    We say that a group $G$ has \emph{property $[T_n]$} if for every $i \leq n$ and for every unitary representation $V$, it holds $H^i(G; V) = 0$. We say that $G$ has \emph{property $(T_n)$} if the vanishing holds under the additional assumption that $V^G = 0$.
\end{definition}

Since we will mostly be working with homology, the following will be more useful.

\begin{proposition}[\cite{ICM}]
\label{tn homology}

    Let $G$ be a group with property $[T_n]$ (respectively, $(T_n)$) and of type $FP_{n+1}(\Q)$. Then for every $i \leq n$ and for every unitary representation $V$ (respectively, every unitary representation $V$ with $V^G = 0$), it holds $H_i(G; V) = 0$.
\end{proposition}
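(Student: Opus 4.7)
The plan is to reduce both homology and cohomology in low degrees to chain complexes of the same Hilbert spaces, whose differentials are mutually adjoint, and then to transfer the hypothesised cohomological vanishing to the homological side.

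First, the assumption of type $FP_{n+1}(\Q)$ provides a partial free resolution $P_{n+1} \to \cdots \to P_0 \to \Q$ in which each $P_i = \Q[G]^{k_i}$ is finitely generated. For any unitary representation $V$, viewed as a right $\Q[G]$-module via the anti-involution $g \mapsto g^{-1}$, both the chain complex $V \otimes_{\Q[G]} P_\bullet$ and the cochain complex $\hom_{\Q[G]}(P_\bullet, V)$ have the same underlying Hilbert spaces $V^{k_i}$ in degrees $i \leq n+1$.

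The key observation is that the chain differential $\partial_{i+1}^V$ and the cochain differential $d^i$ are Hilbert space adjoints of each other. Indeed, if $\partial_{i+1}$ is represented by a matrix over $\Q[G]$, then $d^i$ acts on $V^{k_i}$ by that matrix while $\partial_{i+1}^V$ acts by its transpose with entries passed through the anti-involution; unitarity of $V$ ensures that this anti-involution coincides with the Hilbert space adjoint on $\C[G]$. From the hypothesis $H^i(G; V) = 0$ for $i \leq n$ one obtains $\ker d^i = \im d^{i-1}$, so $\im d^{i-1}$ is closed; by the closed range theorem $\im \partial_i^V = \im (d^{i-1})^\ast$ is closed as well, and combined with the standard Hilbert space identity $\overline{\im \partial_{i+1}^V} = (\ker d^i)^\perp$ this yields $\ker \partial_i^V = \overline{\im \partial_{i+1}^V}$.

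To upgrade this to the unreduced vanishing $H_i(G; V) = 0$, one needs $\im \partial_{i+1}^V$ itself to be closed. For $i \leq n - 1$ this is immediate from closedness of $\im d^i$, which follows from $H^{i+1}(G; V) = 0$. The delicate step, and the main obstacle of the proof, is the top degree $i = n$: here $H^{n+1}(G; V)$ is not controlled by the hypothesis, and closedness of $\im \partial_{n+1}^V$ must be extracted from a genuine spectral-gap consequence of higher property $(T)$, which is the content of \cite{ICM}. The case $(T_n)$ is handled identically, with the additional input that the assumption $V^G = 0$ together with the mean ergodic theorem secures the vanishing of $H_0(G; V) = V_G$.
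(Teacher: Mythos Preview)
Your argument is correct and matches the paper's approach, which is simply a citation to \cite[Lemmas~29 and~31]{ICM}; you have unpacked precisely what those lemmas provide---the adjoint duality between the homology and cohomology complexes over unitary coefficients, and the spectral-gap input needed to close the image in the top degree~$i=n$. One small inaccuracy in your final sentence: the mean ergodic theorem only yields $V=\overline{[G,V]}$, not $V_G=V/[G,V]=0$, since the coinvariants are an \emph{algebraic} quotient; if the case $i=0$ is to be covered at all, the vanishing of $H_0$ comes instead from the same closed-range mechanism you used elsewhere, using that $H^1(G;V)=0$ forces $\im d^0$, hence $\im\partial_1^V$, to be closed and therefore equal to $(\ker d^0)^\perp=(V^G)^\perp=V$.
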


\begin{proof}
    Combine \cite[Lemma 29]{ICM} and \cite[Lemma 31]{ICM}.
\end{proof}

\begin{corollary}
\label{tn betti quotient}

    Let $G$ be a group with property $[T_n]$ (respectively, $(T_n)$) and of type $FP(\Q)$, and let $Q$ be a quotient (respectively, an infinite quotient) of $G$. Then for every $i \leq n$ it holds $\bti(G \to Q) = 0$. Hence
    \[\leuler(G) = \sum\limits_{i > n} (-1)^i\bti(G \to Q).\]
\end{corollary}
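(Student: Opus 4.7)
The plan is to reduce the vanishing to a direct application of \cref{tn homology} via the rank-density comparison of \cref{l2 vs vN}, and then invoke \cref{Euler Poincare formula} to collapse the Euler characteristic.

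First I would let $\pi$ be the pullback to $G$ of the regular representation of $Q$, as in \cref{exa: tracial vN algebras}, so that $V_\pi = \ell^2(Q)$ and $\vN(\pi) = \vN(Q)$. Then \cref{l2 vs vN} identifies
\[\bti(G \to Q) = \dim_{\vN(Q)} H_i\bigl(G; \ell^2(Q)\bigr).\]
Since $\ell^2(Q)$ is a unitary $G$-representation and $G$ is in particular of type $FP_{n+1}(\Q)$, \cref{tn homology} in the $[T_n]$ case forces $H_i(G;\ell^2(Q))=0$ for every $i \leq n$, so $\bti(G \to Q)$ vanishes. In the $(T_n)$ case the extra hypothesis to verify is $\ell^2(Q)^G = 0$: since $G$ acts on $\ell^2(Q)$ through the quotient map $G \to Q$, its $G$-invariants coincide with $\ell^2(Q)^Q$, which is trivial whenever $Q$ is infinite.

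For the Euler characteristic identity, the assumption that $G$ is of type $FP(\Q)$ ensures that $\leuler(G;\pi)$ is well-defined (as in the proof of \cref{Euler Poincare formula}). Applying that proposition to both $\pi$ and the regular representation yields
\[\leuler(G) = \chi(G) = \leuler(G;\pi) = \sum_{i \in \N}(-1)^i \bti(G \to Q),\]
and discarding the zero terms for $i \leq n$ produces the stated formula. I expect no real obstacle: the argument is essentially bookkeeping combining the three cited results, and the only mild subtlety is recognising that the vanishing of $\ell^2(Q)^G$ in the $(T_n)$ case uses the infiniteness of $Q$ rather than any hypothesis on $G$ beyond the existence of the quotient map.
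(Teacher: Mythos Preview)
Your proposal is correct and follows precisely the route indicated in the paper: combine \cref{l2 vs vN} and \cref{tn homology} for the vanishing, then invoke \cref{Euler Poincare formula} for the Euler characteristic identity. The details you supply—identifying $V_\pi=\ell^2(Q)$, checking $\ell^2(Q)^G=\ell^2(Q)^Q=0$ when $Q$ is infinite, and noting that type $FP(\Q)$ makes $\leuler(G;\pi)$ finite—are exactly what the paper's terse proof leaves to the reader.
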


We recall that hyperbolic groups are of type $FP(\Q)$ \cite[Theorem III.$\Gamma$.3.21 on p~468]{bridsonhaefliger}.

\begin{proof}
    Combine Propositions~\ref{l2 vs vN} and~\ref{tn homology}. The second statement now follows from \cref{Euler Poincare formula}.
\end{proof}

Our starting point will be the following example.

\begin{theorem}[\cite{T2}]
\label{t2 lattices}
    Let $G$ be a cocompact lattice in the isometry group of the octonionic hyperbolic plane. Then $G$ has properties $(T_3)$ and $[T_2]$.
\end{theorem}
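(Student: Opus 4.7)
The plan is to pass through the representation theory of the ambient Lie group $\Gamma := \mathrm{Isom}(\mathbb{H}^2_\mathbb{O})^0 \cong F_{4(-20)}$, exploiting that $G$ is a cocompact lattice. First, for any unitary $G$-representation $V$, Shapiro's lemma together with cocompactness identifies $H^i(G; V)$ with the continuous cohomology $H^i_{\mathrm{ct}}(\Gamma; \mathrm{Ind}_G^\Gamma V)$, where the induced Hilbert space representation decomposes unitarily as $\mathrm{Ind}_G^\Gamma V \cong L^2(G \backslash \Gamma) \otimes V$. The van Est theorem then rewrites this as relative Lie algebra cohomology $H^i(\mathfrak{g}, K; \mathcal{H}^\infty)$, where $K = \mathrm{Spin}(9)$ is a maximal compact subgroup and $\mathcal{H}^\infty$ denotes the space of smooth vectors.

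Next, I would decompose $\mathrm{Ind}_G^\Gamma V$ into a direct integral of irreducible unitary $\Gamma$-representations. By the Vogan--Zuckerman classification, the only irreducible unitary $\Gamma$-representations $\pi$ with nonzero $(\mathfrak{g}, K)$-cohomology are the cohomologically induced modules $A_\mathfrak{q}(\lambda)$ attached to $\theta$-stable parabolic subalgebras $\mathfrak{q} \subset \mathfrak{g}_\C$, and the cohomology of $A_\mathfrak{q}(\lambda)$ is supported in a range of degrees starting at $R(\mathfrak{q}) = \dim_\C(\mathfrak{u} \cap \mathfrak{p})$, where $\mathfrak{u}$ is the nilradical of $\mathfrak{q}$. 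The trivial representation ($\mathfrak{q} = \mathfrak{g}_\C$) contributes $H^\bullet(\mathfrak{g}, K; \C) \cong H^\bullet(\mathbb{O}P^2; \C)$, concentrated in degrees $\{0, 8, 16\}$. For $V$ with $V^G = 0$, no trivial $\Gamma$-subrepresentation appears in $\mathrm{Ind}_G^\Gamma V$ (by Shapiro in degree zero), so only non-trivial $A_\mathfrak{q}$ modules contribute; an explicit enumeration of non-trivial $\theta$-stable parabolics of $(\mathfrak{f}_{4(-20)}, \mathfrak{spin}(9))$ shows $R(\mathfrak{q}) \geq 4$ in each case.

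Combining these observations: for arbitrary unitary $V$, the trivial subrepresentation of $\mathrm{Ind}_G^\Gamma V$ contributes only in degrees $0, 8, 16$, while non-trivial cohomological representations contribute from degree $4$ onward, so $H^i(G; V) = 0$ for $1 \leq i \leq 2$, giving $[T_2]$. When $V^G = 0$ the trivial subrepresentation does not appear at all, yielding $H^i(G; V) = 0$ for $i \leq 3$, giving $(T_3)$. The main obstacle is the sharp lower bound $R(\mathfrak{q}) \geq 4$ for non-trivial $\theta$-stable parabolics of $F_{4(-20)}$: this is a concrete representation-theoretic input requiring a case analysis of the restricted root system, and it is exactly where the exceptional rigidity of $F_{4(-20)}$ (shared with $\mathrm{Sp}(n,1)$ for $n \geq 2$, but failing for the generic rank-one families $\mathrm{SO}(n,1)$ and $\mathrm{SU}(n,1)$) enters the argument.
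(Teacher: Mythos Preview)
Your outline is close in spirit to the paper's argument---both route through the unitary representation theory of $F_{4(-20)}$ and invoke the classification of cohomological representations (the paper cites Kumaresan; you phrase it via Vogan--Zuckerman and the bound $R(\mathfrak{q})\geq 4$). The paper is structured slightly differently: it first establishes $(T_3)$ for the ambient Lie group, then passes to cocompact lattices by a separate lemma, and finally upgrades $(T_2)$ to $[T_2]$ by observing $b_2(G)=0$ via Matsushima (equivalent to your remark that $H^\bullet(\mathfrak g,K;\C)\cong H^\bullet(\mathbb{O}P^2)$ is concentrated in degrees $0,8,16$).

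There is, however, a genuine gap. After decomposing $\operatorname{Ind}_G^\Gamma V$ as a direct integral of irreducibles, you tacitly assume that $(\mathfrak g,K)$-cohomology vanishes for the integral because it vanishes for each irreducible constituent. Continuous (or relative Lie algebra) cohomology does \emph{not} in general commute with direct integrals: smooth vectors of a direct integral are not simply the direct integral of smooth vectors, and the cochain complex does not split accordingly. This passage from irreducibles to arbitrary unitary representations is exactly the non-trivial step the paper isolates---it cites Kumaresan for the irreducible case and then appeals to the ``ultrapower technique'' of \cite{T2} to promote the statement to all unitary representations without invariants. Your sketch needs some mechanism of this kind; as written, you have only proved the vanishing for irreducible $V$ (or, via Shapiro, for those $V$ whose induction happens to be irreducible). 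A minor additional point: the identification $\operatorname{Ind}_G^\Gamma V \cong L^2(G\backslash\Gamma)\otimes V$ you wrote holds only as Hilbert spaces, not as $\Gamma$-representations, so it should not be used in the analysis.
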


Cocompact lattices in the isometry group of the octonionic hyperbolic plane exist \cite{BHC}, and they can be chosen torsion-free up to passing to a finite index subgroup \cite{selberg}. In the proofs of our main results, we will actually only need that $G$ has property $(T_2)$: see \cref{t2 sufficient}.

\begin{proof}
    The ambient Lie group has property $(T_3)$ by \cite[Theorem B and Appendix A]{T2}. We emphasise that this relies only on a small part of~\cite{T2}: It follows from a result of Kumaresan~\cite[Theorem~3]{kumaresan}
    for non-trivial irreducible unitary representation of the ambient Lie group, and the ultrapower technique in~\cite{T2}.
    Finally, property~$(T_3)$ passes to cocompact lattices by~\cite[Lemma 3.13]{T2}. Moreover, the second Betti number of $G$ vanishes, because it is the fundamental groups of a non-Hermitian locally symmetric space of rank one \cite{matsushima}, so $G$ also has $[T_2]$.
\end{proof}

We conclude with a useful fact about subgroups.

\begin{proposition}[{\cite[Theorem 6.29 on p.~253]{luck}}]
\label{subgroup induction}

    Let $G \to Q$ be a homomorphism, and let $Q \to L$ be an injective homomorphism. Then
    \[\dim_{\vN(Q)}H_n(G; \vN(Q)) = \dim_{\vN(L)} H_n(G; \vN(L)),\]
    for all $n \in \N$.
\end{proposition}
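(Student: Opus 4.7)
My plan is to express both homologies as the homology of a chain complex obtained by tensoring a projective $\Z[G]$-resolution with the relevant group von Neumann algebra, and then to reduce the desired equality to a base-change formula for L\"uck's dimension along the trace-preserving inclusion $\vN(Q) \hookrightarrow \vN(L)$ induced by $Q \hookrightarrow L$.

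First I would pick a projective $\Z[G]$-resolution $P_\ast$ of $\Z$. Then $H_n(G;\vN(Q))$ is the $n$-th homology of $C_\ast \coloneqq \vN(Q) \otimes_{\Z[G]} P_\ast$, and $H_n(G;\vN(L))$ is the $n$-th homology of $\vN(L) \otimes_{\Z[G]} P_\ast$. Since the $G$-action on $\vN(L)$ factors through $Q$, there is a natural identification
\[\vN(L) \otimes_{\Z[G]} P_\ast \cong \vN(L) \otimes_{\vN(Q)} C_\ast,\]
so it suffices to compare $\dim_{\vN(Q)} H_n(C_\ast)$ with $\dim_{\vN(L)} H_n\bigl(\vN(L) \otimes_{\vN(Q)} C_\ast\bigr)$.

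Two facts about the inclusion $\vN(Q) \hookrightarrow \vN(L)$ then resolve this. The first is \emph{dimension-flatness}: for $k \geq 1$ and every $\vN(Q)$-module~$M$, the module $\mathrm{Tor}^{\vN(Q)}_{k}(\vN(L), M)$ has vanishing $\vN(L)$-dimension. Via the standard double-complex spectral sequence, this gives
\[\dim_{\vN(L)} H_n\bigl(\vN(L) \otimes_{\vN(Q)} C_\ast\bigr) = \dim_{\vN(L)}\bigl(\vN(L) \otimes_{\vN(Q)} H_n(C_\ast)\bigr).\]
The second is the \emph{base-change identity}
\[\dim_{\vN(L)}\bigl(\vN(L) \otimes_{\vN(Q)} M\bigr) = \dim_{\vN(Q)}(M)\]
for every $\vN(Q)$-module $M$. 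Applying both to $M = H_n(G; \vN(Q))$ yields the proposition.

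The hard part is establishing these two identities, and both ultimately rest on the faithful trace-preserving conditional expectation $\vN(L) \to \vN(Q)$ coming from the inclusion of groups $Q \hookrightarrow L$. The base-change formula is proved by cofinality: it is obvious for $M = \vN(Q)$ since the trace on $\vN(L)$ restricts to the trace on $\vN(Q)$; it extends to finitely generated projectives by additivity of dimension (cf.~\cref{direct sums}); and it passes to arbitrary~$M$ via the characterisation of L\"uck's dimension as a supremum over finitely generated projective subquotients. Dimension-flatness then falls out of the same framework once one controls the effect of tensoring with $\vN(L)$ on finitely presented modules.
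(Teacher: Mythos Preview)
The paper does not supply its own proof of this proposition; it is simply cited from L\"uck's book, so there is nothing in the paper to compare your argument against. Your outline is correct and is essentially the argument behind the cited result: the inclusion $\vN(Q)\hookrightarrow\vN(L)$ induced by an injective group homomorphism is trace-preserving, and L\"uck's Theorem~6.29 establishes precisely the dimension-flatness and base-change identities you invoke (these appear in \cite{luck} as parts of Theorem~6.29 itself, with the conditional expectation playing the role you describe).
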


\section{Group-theoretic Dehn filling}

In this section we introduce group-theoretic Dehn filling, and do some preliminary work towards our main construction.

\subsection{Cohen--Lyndon triples}

Let $P < G$ and let $N$ be a normal subgroup of $P$. We denote $\bP \coloneqq P/N$ and $\bG \coloneqq G/\normal{N}$, where $\normal{N} = \normal{N}_G$ is the normal closure of $N$ in $G$. Such a quotient is called a \emph{group-theoretic Dehn filling of $(G, P)$}, or simply of $G$, if $P$ is clear from the context.

\begin{definition}
    We say that $(G, P, N)$ is a \emph{Cohen--Lyndon triple} if there exists a set of left transversals $T$ of $G/P \normal{N}$ such that
    \[\normal{N} = \ast_{t \in T} tNt^{-1}.\]
\end{definition}

This notion goes back to the work of Cohen--Lyndon on one-relator groups \cite{CL}; in this form it is defined in \cite{PS1}. The most important consequence is the following algebraic excision theorem.

\begin{theorem}[{\cite[Theorem A (ii)]{PS2}, see also \cite[Corollary 4.5]{PS3}}]
\label{excision}
    Let $(G, P, N)$ be a Cohen--Lyndon triple. Then for all $\Z [\bG]$-modules $M$ and all $n \in \N$, the quotients $G \to \bG$ and $P \to \bP$ induce isomorphisms
    \[H_n(G, P; M) \cong H_n(\bG, \bP; M) \quad \text{ and } \quad H^n(G, P; M) \cong H^n(\bG, \bP; M).\]
\end{theorem}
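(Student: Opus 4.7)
My plan is to prove the excision isomorphism by building compatible CW-models for the pairs $(G, P)$ and $(\bG, \bP)$ and then reducing to cellular excision. First, I would fix a CW-pair $(X, Y)$ with $X = K(G, 1)$ and $Y = K(P, 1)$ as a subcomplex, and a CW-model $\bar Y = K(\bP, 1)$ containing $Y$ as a subcomplex, obtained by attaching to $Y$ the cells needed to kill $N \leq P$ and to make the result aspherical. I then form the pushout $\bar X \coloneqq X \cup_Y \bar Y$. By Seifert--van Kampen, $\pi_1(\bar X) = G \ast_P \bP$, and since $P \to \bP$ is the surjection with kernel $N$, this amalgamated product collapses to $G/\normal{N} = \bG$. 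By construction of the pushout, $\bar X \setminus \bar Y = X \setminus Y$ cell-by-cell, so if $\bar X$ is aspherical the relative cellular chain complexes literally agree, $C_\ast(\bar X, \bar Y; \mathcal M) = C_\ast(X, Y; \mathcal M)$ for any $\bG$-module $M$, yielding the desired isomorphism on both homology and cohomology.

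The main obstacle is therefore the asphericity of $\bar X$, and this is precisely where the Cohen--Lyndon hypothesis is used. Consider the intermediate cover $X^\ast \coloneqq \tilde X / \normal{N} \to X$, which has deck group $\bG$ and fundamental group $\normal{N}$. As a cover of the aspherical $X$ it is itself aspherical, hence a $K(\normal{N}, 1)$. The Cohen--Lyndon decomposition $\normal{N} = \ast_{t \in T} tNt^{-1}$ then yields a homotopy equivalence $X^\ast \simeq \bigvee_{t \in T} K(tNt^{-1}, 1)$. Next, one analyses the universal cover $\widetilde{\bar X}$ as a pushout: it is built from $X^\ast$ by attaching one copy of the contractible $\widetilde{\bar Y}$ for each of the $|T|$ components of the preimage of $Y$ in $X^\ast$. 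A short computation, using the standard consequence $\normal{N} \cap P = N$ of Cohen--Lyndon, identifies these $|T|$ components with the $|T|$ wedge summands of the decomposition: the $t$-th component has fundamental group $\normal{N} \cap tPt^{-1} = tNt^{-1}$ and realises the subgroup inclusion into $\normal{N}$ matching the $t$-th wedge factor. Therefore $\widetilde{\bar X}$ is homotopy equivalent to a wedge of contractibles — each wedge summand $K(tNt^{-1}, 1)$ being replaced by the contractible $\widetilde{\bar Y}$ attached to it — and hence $\widetilde{\bar X}$ is contractible and $\bar X$ is a $K(\bG, 1)$.

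With asphericity established, cellular excision gives $H_n(G, P; M) \cong H_n(\bG, \bP; M)$, and the cohomological version follows by dualising to $\hom_{\Z \bG}\bigl(-, M\bigr)$ before excision. The hardest step is the identification in the second paragraph of the preimage components of $Y$ in $X^\ast$ with the wedge summands of the Cohen--Lyndon decomposition, equivariantly for the deck $\bG$-action; once this is in place, the rest of the proof is routine homotopy theory of CW-pairs together with Seifert--van Kampen and covering theory.
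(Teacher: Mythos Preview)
The paper does not prove this theorem; it is quoted from \cite{PS2} (with a pointer to \cite{PS3}) and used as a black box. So there is no in-paper argument to compare against. Your topological approach---constructing a CW model for $(\bG,\bP)$ as the pushout $\bar X=X\cup_Y\bar Y$ and deducing excision from asphericity of $\bar X$---is correct and is in the spirit of the cited references. The reduction step is exactly right: once $\bar X$ is a $K(\bG,1)$, the cells of $\bar X$ outside $\bar Y$ are, $\bG$-equivariantly on the covers, the same as the cells of $X$ outside $Y$, so the relative cellular chain complexes over $\Z[\bG]$ literally coincide and both the homological and cohomological isomorphisms follow.

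Two remarks on the asphericity step. First, rather than producing an explicit homotopy equivalence of $X^*$ with a wedge that carries each preimage component $Z_t$ to a summand, it is cleaner to observe that $\widetilde{\bar X}$ is simply connected and then use Mayer--Vietoris: contractibility follows once the inclusion $\coprod_t Z_t\hookrightarrow X^*$ is a homology isomorphism in positive degrees. Since both sides are aspherical, this reduces to the group-level statement that the inclusions $tNt^{-1}\hookrightarrow\normal{N}$ assemble to the isomorphism $\bigoplus_t H_n(tNt^{-1})\cong H_n(\normal{N})$, which is immediate from the free-product decomposition. Second, for the identification $\pi_1(Z_t)=\normal{N}\cap tPt^{-1}=tNt^{-1}$ you invoke as a ``standard consequence'' of Cohen--Lyndon: the quick argument is that free factors are self-normalising in a free product, while any element of $P\cap\normal{N}$ normalises the free factor $N$ (since $N\trianglelefteq P$) and hence lies in $N$; conjugating by $t$ gives the general case. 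With these two points made precise, your sketch is a complete proof.
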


We will mostly be concerned with an especially well-behaved case.

\begin{corollary}
\label{excision elementary}
    Let $(G, P, N)$ be a Cohen--Lyndon triple. Suppose that $P$ is virtually cyclic, $N$ is infinite cyclic, and hence $\bP$ is finite. Let $R$ be a ring where $[P:N]$ is invertible. Then for all $R [\bG]$-modules $M$ and all $n \geq 3$, the quotient $G \to \bG$ induces isomorphisms
    \[H_n(G; M) \cong H_n(\bG; M) \quad \text{ and } \quad H^n(G; M) \cong H^n(\bG; M).\]
    Moreover, there is an exact sequence
    \[0 \to H_2(G; M) \to H_2(\bG; M) \to M_{\bP} \to H_1(G;M).\]
\end{corollary}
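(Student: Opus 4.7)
The plan is to run the long exact sequences of the pairs $(G,P)$ and $(\bar G, \bar P)$ in parallel, compare them via the excision isomorphism from \cref{excision}, and use the hypotheses on $P$ and $\bar P$ to collapse almost all the terms.

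First I would compute $H_\ast(P;M)$ and $H_\ast(\bar P; M)$. Since $\bar P = P/N$ is finite of order $[P:N]$ invertible in $R$, \cref{coinvariants} applied to $N \triangleleft P$ gives $H_n(P;M) \cong H_n(N;M)_{\bar P}$. Now $N \cong \Z$ acts trivially on $M$ (because $N$ lies in the kernel of $G \to \bar G$ and $M$ is an $R[\bar G]$-module), so $H_0(N;M) = H_1(N;M) = M$ and $H_n(N;M) = 0$ for $n \geq 2$. Thus $H_0(P;M) = H_1(P;M) = M_{\bar P}$ and $H_n(P;M) = 0$ for $n \geq 2$. The analogous (and easier) application of \cref{coinvariants} with $N = \{1\}$ and $G = \bar P$ gives $H_0(\bar P;M) = M_{\bar P}$ and $H_n(\bar P; M) = 0$ for $n \geq 1$. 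The cohomological versions are identical.

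Now I would combine this with the long exact sequence in homology of the pair $(\bar G, \bar P)$: since $H_n(\bar P; M) = 0$ for $n \geq 1$, this sequence yields $H_n(\bar G; M) \cong H_n(\bar G, \bar P; M)$ for all $n \geq 2$. For the pair $(G, P)$, the same long exact sequence combined with $H_n(P;M) = 0$ for $n \geq 2$ yields $H_n(G;M) \cong H_n(G, P; M)$ for all $n \geq 3$, and for $n = 2$ gives the four-term exact sequence
\[0 \to H_2(G;M) \to H_2(G, P; M) \to H_1(P; M) \to H_1(G; M).\]
Feeding in the excision isomorphism $H_n(G, P; M) \cong H_n(\bar G, \bar P; M)$ from \cref{excision}, the identifications above, and the computation $H_1(P; M) = M_{\bar P}$, one obtains both the isomorphism $H_n(G;M) \cong H_n(\bar G; M)$ for $n \geq 3$ and the claimed four-term exact sequence in degree $2$. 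The cohomological statement for $n \geq 3$ follows by running the entire argument with the cohomological long exact sequences and the cohomological half of \cref{excision}; there is no need for a cohomological analogue of the four-term sequence since that is not part of the claim.

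The content is essentially bookkeeping with long exact sequences, so there is no serious obstacle; the only point requiring a bit of care is verifying that the hypothesis ``$[P:N]$ invertible in $R$'' is the correct (and sufficient) one to apply \cref{coinvariants} to $N \triangleleft P$, and that $N$ acts trivially on $M$ so the computation of $H_\ast(N;M)$ really is the one above.
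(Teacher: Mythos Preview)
Your proposal is correct and follows essentially the same approach as the paper's proof: both run the long exact sequences of the pairs $(G,P)$ and $(\bG,\bP)$, invoke the excision isomorphism of \cref{excision}, and use the invertibility of $[P:N]$ to kill the (co)homology of $P$ and $\bP$ in the relevant degrees. The only cosmetic difference is that the paper phrases the vanishing as $cd_R(P)=1$ and $cd_R(\bP)=0$, while you compute $H_\ast(P;M)$ and $H_\ast(\bP;M)$ explicitly via \cref{coinvariants}; the identification $H_1(P;M)\cong M_{\bP}$ is obtained identically in both.
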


\begin{proof}
    By the assumption on invertibility of $[P:N]$, we have $cd_R(\bP) = 0$ and $cd_R(P) = 1$. Therefore $H_n(\bG, \bP; M) \cong H_n(\bG; M)$ for all $n \geq 2$, and $H_n(G, P; M) \cong H_n(G; M)$ for all $n \geq 3$. Using this, for all $n \geq 3$, the long exact sequences in homology for the pairs $(\bG, \bP)$ and $(G, P)$, together with \cref{excision}, give isomorphisms
    \[H_n(\bG; M) \cong H_n(\bG, \bP; M) \cong H_n(G, P; M) \cong H_n(G; M),\]
    and similarly for cohomology.

    For the second statement, the long exact sequence in homology of the pair $(G, P)$ includes:
    \[H_2(P; M) \to H_2(G; M) \to H_2(G, P; M) \to H_1(P;M) \to H_1(G;M).\]
    The first term vanishes because $cd_R(P) = 1$. Using $cd_R(\bP) = 0$ and \cref{excision}, the third term becomes $H_2(\bG; M)$. Finally, the fourth term can be replaced by $H_1(N; M)_{\bP}$ by \cref{coinvariants}. But $N \cong \Z$ acts trivially on $M$, so $H_1(N; M) = M$.
\end{proof}

% \begin{corollary}
% \label{dimension dehn fillings}
%     Let $G_1 \to G_2 \to \cdots$ be a directed sequence of groups with colimit $G_\infty$. Suppose that for all $i \in \N$, the quotient $G_i \to G_{i+1}$ is a group-theretic Dehn filling corresponding to a Cohen--Lyndon triple $(G_i, P_i, N_i)$, where $P_i$ is virtually cyclic and $P_i/N_i$ is finite. Let $R$ be a ring where each $[P_i : N_i]$ is invertible. Suppose that $cd_R(G_1) = n \geq 4$, and that $H^n(G_1;R) \neq 0$. Then $cd_R(G_\infty) = n$.
% \end{corollary}

% \begin{proof}
%     By \cref{excision elementary}, for all $k \geq 3$ and every $RG_{i+1}$-module $A$, each pullback $H^k(G_{i+1}; A) \to H^k(G_i; A)$ is an isomorphism. This implies that $cd_R(G_i) = n$ for every $i \in \N$. Now \cref{dimensions} applies.
% \end{proof}

A useful feature of the Cohen--Lyndon property is that it is transitive. The following is a generalisation of \cite[Lemma 4.22]{wreathlike2}, which deals with the case in which $L = N$.

\begin{lemma}
\label{CL transitive}
Let $N < L < P < G$ be groups such that $(G, P, \normal{N}_P)$ and $(P, L, N)$ are Cohen--Lyndon triples (in particular $N$ is normal in $L$). Then $(G, L, N)$ is a Cohen--Lyndon triple.
\end{lemma}

\begin{proof}
Let $T$ be a set of left transversals of $G/P\normal{N}_G$ such that $\normal{N}_G = *_{t \in T} t \normal{N}_P t^{-1}$, and let $S$ be a set of left transversals of $P/L \normal{N}_P$ such that $\normal{N}_P = *_{s \in S} s N s^{-1}$. Hence
\[\normal{N}_G = *_{t \in T} t \normal{N}_P t^{-1} = *_{t \in T, s \in S} t s N (t s)^{-1},\]
so it remains to show that $TS$ is a set of left transversals for $G/L\normal{N}_G$.

For $g \in G$, there exist $t \in T$ and $p \in P$ such that $g \in t p \normal{N}_G$. Further, there exists $s \in S$ such that $p \in s L \normal{N}_P$. Hence
\[g \in t p \normal{N}_G \subset t (s L \normal{N}_P) \normal{N}_G = ts L \normal{N}_G.\]
Suppose that there exist $t_i \in T, s_i \in S$ such that $t_1 s_1 \in t_2 s_2 L \normal{N}_G$. Then
\[t_2^{-1} t_1 \in s_2 L \normal{N}_G s_1^{-1} = s_2 L s_1^{-1} \normal{N}_G \subset P \normal{N}_G.\]
Because $T$ is a set of left transversals of $G/P \normal{N}_G$, we have $t_1 = t_2$, so $s_2^{-1} s_1 \in L \normal{N}_G$. Let $l \in L$ be such that $l^{-1} s_2^{-1} s_1 \in \normal{N}_G$. Now $l^{-1} s_2^{-1} s_1 \in P$, and $\normal{N}_G \cap P = \normal{N}_P$ because $(G, P, \normal{N}_P)$ is a Cohen--Lyndon triple \cite[Proposition 6.1(a)]{PS1}, so $l^{-1} s_2^{-1} s_1 \in \normal{N}_P$, that is $s_1 \in s_2 L \normal{N}_P$. Because $S$ is a set of left transversals for $P/L \normal{N}_P$, we have $s_1 = s_2$ and conclude.
\end{proof}

\subsection{(Relatively) hyperbolic groups}

When $G$ is hyperbolic relative to $P$, group-theoretic Dehn fillings are especially well-behaved \cite{osin}. The following was proved in \cite{CL:RH}, and then in the more general context of groups with hyperbolically embedded subgroups \cite{PS1}.

\begin{theorem}[{\cite{CL:RH, PS1}}]
\label{CL RH}

    Let $G$ be hyperbolic relative to $P$. Then for every sufficiently deep normal subgroup $N < P$, the triple $(G, P, N)$ is Cohen--Lyndon.
\end{theorem}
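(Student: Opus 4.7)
The plan is to invoke the theory of very rotating families developed by Dahmani--Guirardel--Osin (DGO). Since $G$ is hyperbolic relative to $P$, the subgroup $P$ is hyperbolically embedded in $G$, so there is a cobounded acylindrical action of $G$ on a Gromov-hyperbolic space $X$ together with an apex point $x_0 \in X$ whose stabiliser is exactly $P$, and a $G$-equivariant family of apexes $\{g\cdot x_0 : gP \in G/P\}$. (In the classical relatively hyperbolic setup of \cite{osin}, $X$ can be taken to be the coned-off Cayley graph of $G$ relative to $P$, together with a combinatorial horoball modification to achieve acylindricity.)

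First I would fix such an $X$ and apex $x_0$. The key geometric input is DGO's rotating family theorem: if one equips each apex $g\cdot x_0$ with the rotation subgroup $gNg^{-1}$, where $N \triangleleft P$ is chosen so that every nontrivial element of $N$ has translation length on $X$ above a threshold depending only on the hyperbolicity and acylindricity constants of $G \curvearrowright X$, then the resulting family is a \emph{very rotating family}. The main theorem on such families then yields two conclusions simultaneously: that $\normal{N} \cap P = N$, so the rotation subgroups $gNg^{-1}$ are parametrised by the cosets in $G/P\normal{N}$; and that $\normal{N}$ is canonically isomorphic to the free product $\ast_{t\in T} tNt^{-1}$ taken over any transversal $T$ for $G/P\normal{N}$. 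This is exactly the Cohen--Lyndon condition.

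The main obstacle is verifying the very rotating axioms, which boils down to a small-cancellation estimate in the coned-off complex: for distinct apexes $g_1 \cdot x_0 \neq g_2 \cdot x_0$, no nontrivial element of $g_1 N g_1^{-1}$ should displace $g_2 \cdot x_0$ too little, and long alternating concatenations of rotations at distinct apexes must not collapse to the identity. Both features are ensured by taking $N$ deep enough (every nontrivial element having translation length above an explicit threshold), and this geometric bookkeeping in the relative Cayley graph is precisely what is carried out in full detail in \cite{CL:RH, PS1}. Once these axioms are in place, the Cohen--Lyndon decomposition is an immediate consequence of the rotating family theorem of DGO.
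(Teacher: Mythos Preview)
The paper does not prove this theorem: it is stated with attribution to \cite{CL:RH, PS1} and used as a black box. So there is no ``paper's own proof'' to compare against. Your sketch is broadly aligned with the approach of \cite{PS1}, which does indeed deduce the Cohen--Lyndon property from the DGO structure theorem for groups generated by very rotating families.

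That said, several of your technical descriptions are garbled in ways that would matter if you tried to fill in the details. The apex points in the DGO setup are cone points of the coned-off Cayley graph, and elements of $N$ \emph{fix} the apex $x_0$; so the ``deepness'' condition is not that nontrivial elements of $N$ have large translation length on $X$ (they have translation length zero at $x_0$). Rather, the very rotating condition requires that nontrivial elements of $N$ move points \emph{away} from the apex by a large angle---concretely, that any geodesic entering and leaving a cone point makes a large detour unless it is rotated by an element of $N$. This is what ``sufficiently deep'' buys. Similarly, the horoball construction of Groves--Manning is a different model (giving a proper action rather than cone points), and is not what is used here; and acylindricity plays no direct role, since $P$ has a global fixed point. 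None of this breaks your overall outline, but if you were to write out the argument rather than cite it, you would need to get the geometry of the coned-off graph and the precise very rotating axioms right.
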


Here a property is said to hold for \emph{sufficiently deep} normal subgroups $N < P$ if there exists a finite subset $F \subset P \setminus \{ 1 \}$ such that the property holds for all normal subgroups $N < P$ such that $N \cap F = \emptyset$. In this context, group-theoretic Dehn fillings retain many geometric properties.

\begin{theorem}[\cite{osin}]
\label{osin}

    Let $G$ be hyperbolic relative to $P$. Let $B \subset G$ be a finite subset. Then for every sufficiently deep normal subgroup $N < P$, denoting by $\bG \coloneqq G/\normal{N}$ and $\bP \coloneqq P/N$:
    \begin{enumerate}
        \item The group $\bG$ is hyperbolic relative to $\bP$;
        \item The quotient $G \to \bG$ is injective on $B$ and induces embedding $\bP \to \bG$.
    \end{enumerate}
\end{theorem}

We will mostly be concerned with a particular example of a relatively hyperbolic pair. Recall that a subgroup $P < G$ is \emph{almost malnormal} if $gPg^{-1} \cap P$ is finite for all $g \in G \setminus P$.

\begin{theorem}[{\cite[Theorem 7.11]{bowditch}}]
\label{quasiconvex RH}

    Let $G$ be a non-elementary hyperbolic group, and let $P < G$ be quasiconvex and almost malnormal. Then $G$ is hyperbolic relative to $P$.
\end{theorem}

The main example is the following.

\begin{definition}
Let $G$ be a non-elementary hyperbolic group, and $g \in G$ an element of infinite order. The \emph{elementary closure} of $g$ is the unique maximal virtually cyclic subgroup $E_G(g) < G$ containing $g$. We say that $g \in G$ is \emph{special} if it has infinite order and $E_G(g) = \langle g \rangle$.
\end{definition}

Starting from a quasiconvex almost malnormal subgroup, we can build a new one. The following notion is central in much of the literature on small cancellation theory on groups with hyperbolic features \cite{olshanskii, osin:sc, hull}.

\begin{definition}
Let $G$ be a non-elementary hyperbolic group. A subgroup $H < G$ is \emph{suitable} if it is non-elementary and does not normalise any non-trivial finite subgroup of $G$.
\end{definition}

Once again there is a main example we will care about.

\begin{lemma}[{\cite[Lemma 2.4]{osin:measures}}]
\label{normal suitable}

Let $G$ be a non-elementary hyperbolic group with no non-trivial finite normal subgroups. Then every non-trivial normal subgroup of $G$ is suitable.
\end{lemma}

The following, which is the consequence of the more general \cite[Theorem 6.2]{abbotthull}, says that we can extend quasiconvex almost malnormal subgroups via elements of suitable subgroups.

\begin{theorem}[\cite{abbotthull}]
\label{randomwalk}

	Let $G$ be a non-elementary hyperbolic group with no non-trivial finite normal subgroups, and let $P < G$ be an infinite-index quasiconvex almost malnormal subgroup. Let $H < G$ be a suitable subgroup. Then there exists an element $h \in H$ such that $\langle P, h\rangle \cong P * \langle h \rangle$ is an infinite-index quasiconvex almost malnormal subgroup.
\end{theorem}

We can now use these results to prove the two main technical propositions of this section, which will be used as the induction step in our constructions. The first is about Dehn filling with respect to a \emph{large power} of an \emph{arbitrary} element.

\begin{proposition}
\label{induction step large power}

    Let $G$ be a non-elementary hyperbolic group with no non-trivial finite normal subgroups, and let $B \subset G$ be a finite subset. Let $g \in G$ be an element of infinite order and let $P \coloneqq E_G(g)$. Then for every sufficiently deep finite-index normal subgroups $N < P$, denoting by $\bG \coloneqq G/\normal{N}$ and $\bP \coloneqq P/N$:
    \begin{enumerate}
        \item The triple $(G, P, N)$ is Cohen--Lyndon;
        \item The group $\bG$ is non-elementary hyperbolic with no non-trivial finite normal subgroups;
        \item The quotient $G \to \bG$ is injective on $B$, and induces an embedding $\bP \hookrightarrow \bG$.
    \end{enumerate}
\end{proposition}

\begin{proof}
    By \cref{quasiconvex RH}, $G$ is hyperbolic relative to $P$. The Cohen--Lyndon property follows from \cref{CL RH}; the hyperbolicity follows from \cref{osin}, since a group that is hyperbolic relative to a finite subgroup is hyperbolic; the injectivity follows again from \cref{osin}.

    It remains to prove that $\bG$ is non-elementary and without finite normal subgroups. To this end, we use \cref{randomwalk} to find an additional element $h \in G$ such that $G$ is hyperbolic relative to $P \coloneqq E_G(g) * \langle h \rangle$. We can now see $G/\normal{N}_G$ as a Dehn filling of $(G, P)$ with respect to the sufficiently deep normal subgroup $\normal{N}_P < P$. In particular, $\bG$ is hyperbolic relative to $P/\normal{N}_P \cong \bP * \langle h \rangle$. Since this is not virtually cyclic and has no non-trivial finite normal subgroups, the same is true for $G$, by \cite[Theorem 6.14(b)]{DGO}.
\end{proof}

The second result is about Dehn filling with respect to a \emph{prescribed power} of a \emph{well-chosen} element.

\begin{proposition}
\label{induction step prescribed power}

    Let $G$ be a non-elementary hyperbolic group with no non-trivial finite normal subgroups, let $B \subset G$ be a finite subset and let $k \geq 1$. Let $H < G$ be a suitable subgroup and let $x \in G$. Then there exists an element $g \in xH$ with the following properties. Let $\bG \coloneqq G/\normal{g^k}$.
    \begin{enumerate}
        \item The triple $(G, \langle g \rangle, \langle g^k \rangle)$ is Cohen--Lyndon;
        \item The group $\bG$ is non-elementary hyperbolic with no non-trivial finite normal subgroups;
        \item The quotient $G \to \bG$ is injective on $B$, and the image of $g$ has order $k$.
    \end{enumerate}
\end{proposition}

\begin{proof}
We first find a special element $y \in xH$. By \cite[Lemma 3.8]{SQ}, the suitable subgroup $H$ contains a special element $h$. If $x \in H$, then we can set $y = h \in xH$; otherwise \cite[Lemma 3.2]{osinthom} ensures that $x\langle h \rangle$ contains a special element $y$. So $E_G(y) = \langle y \rangle$ is infinite-index, quasiconvex and almost malnormal in $G$. Applying \cref{randomwalk} twice, we find elements $z, w \in H$ such that $P \coloneqq \langle y, z, w \rangle$ is a free group of rank $3$ that is infinite-index, quasiconvex and almost malnormal. By \cref{quasiconvex RH}, $G$ is hyperbolic relative to $P$. We choose
\[g \coloneqq y z w z^2 w^2 \cdots z^n w^n,\]
where $n$ is large enough such that $g$ is a $C'(\lambda)$-small cancellation word in $P$, for some small parameter $\lambda > 0$. By classical small cancellation theory (see e.g. \cite[Chapter V]{lyndon:schupp}), the normal subgroup $\normal{g}_P < P$ is sufficiently deep, hence so is $\normal{g^k}_P$. We can thus apply \cref{osin} and obtain that $G / \normal{g^k}_G$ is hyperbolic relative to $P/\normal{g^k}_P$, and that both $B$ and $P/\normal{g^k}_P$ embed into $\bG$.

If $k = 1$, then $P/\normal{g^k}_P = P/\normal{g}_P$ is a classical small cancellation group, hence it is hyperbolic \cite{gromov:hyp}. If $k > 1$, then $P/\normal{g^k}_P$ is a one-relator group with torsion, hence Newman's spelling theorem \cite{newman} implies that it is hyperbolic, and that the image of $g$ has order $k$. In both cases, $\bG$ is hyperbolic relative to a non-elementary hyperbolic group, hence it is itself non-elementary hyperbolic. Since $P/\normal{g^k}_P$ also has no non-trivial finite normal subgroups in both cases, the same is true of $\bG$ by \cite[Theorem 6.14(b)]{DGO}. Finally, in both cases $g$ has order $k$ in $P/\normal{g^k}_P$, which embeds in $\bG$, so $g$ has order $k$ in $\bG$ as well.

It remains to show the first item. Since $\normal{g^k}_P < P$ is sufficiently deep, by \cref{CL RH} the triple $(G, P, \normal{g^k}_P)$ is Cohen--Lyndon. Moreover by the Cohen--Lyndon theorem on one-relator groups \cite{CL}, the triple $(P, \langle g \rangle, \langle g^k \rangle)$ is Cohen--Lyndon. So \cref{CL transitive} implies that $(G, \langle g \rangle, \langle g^k \rangle)$ is Cohen--Lyndon.
\end{proof}

\subsection{Lacunary hyperbolic groups}

A group is said to be \emph{lacunary hyperbolic} if one of its asymptotic cones is an $\R$-tree. We will use the following useful characterisation.

\begin{theorem}[{\cite[Theorem 1.1]{lacunary}}]
\label{lacunary}

    Let $G_0 \to G_1 \to \cdots$ be a directed sequence of epimorphisms of hyperbolic groups, with colimit $G_\infty$. Let $S_0$ be a finite generating set for $G_0$, and let $S_i$ be its pushforward to $G_i$; let $\delta_i$ be the hyperbolicity constant of $\mathrm{Cay}(G_i, S_i)$. For each $i \in \N$, let $\rho_i$ be the radius of the largest ball centred at the identity on which the quotient $G_i \to G_{i+1}$ is injective.

    If $\delta_i = \mathrm{o}(\rho_i)$, then $G_\infty$ is lacunary hyperbolic.
\end{theorem}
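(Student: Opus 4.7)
The plan is to construct an asymptotic cone of $\Cay(G_\infty, S_\infty)$ that is an $\R$-tree, where $S_\infty$ is the image of $S_1$. Recall that a geodesic metric space is an $\R$-tree iff every geodesic triangle in it is a tripod, so it suffices to find an unbounded scaling sequence $(\lambda_i)$ and a non-principal ultrafilter $\omega$ such that, $\omega$-almost surely, every geodesic triangle in $\Cay(G_\infty, S_\infty)$ with vertices in the ball of radius $O(\lambda_i)$ around the identity is $o(\lambda_i)$-thin; rescaling by $\lambda_i^{-1}$ then makes all triangles in the cone infinitely thin.

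The first step is to transfer hyperbolicity from each $G_i$ to a large ball in $G_\infty$. The hypothesis provides injectivity of $G_i \to G_{i+1}$ on the $\rho_i$-ball only, but after passing to a subsequence along which $\rho_{j+1} \gg \rho_j + \delta_j$ for $j \geq i$, one arranges that the composition $G_i \to G_\infty$ is also injective on a ball of radius $\rho_i/2$. Under this arrangement, the ball of radius $\rho_i/2$ around the identity in $\Cay(G_\infty, S_\infty)$ is isometric to the corresponding ball in $\Cay(G_i, S_i)$; by translation the same identification holds at every basepoint. Since $\Cay(G_i, S_i)$ is globally $\delta_i$-hyperbolic, a geodesic triangle in $\Cay(G_\infty, S_\infty)$ with vertices in the ball of radius $\rho_i/4$ around the identity has sides of length at most $\rho_i/2$, so both the triangle and its geodesic bigon fillings can be unambiguously identified with objects in $\Cay(G_i, S_i)$ and are therefore $\delta_i$-thin.

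The second step is then formal: take $\lambda_i = \rho_i$, which tends to infinity by the hypothesis $\delta_i = o(\rho_i)$. Any geodesic triangle in $\operatorname{Cone}_\omega(\Cay(G_\infty, S_\infty); e, \rho_i)$ lifts to a sequence of triangles with vertices in balls of radius $O(\rho_i)$ around the identity of $G_\infty$; by the previous step these are $\delta_i$-thin, so after rescaling distances by $\rho_i$ they become $(\delta_i/\rho_i)$-thin, which tends to zero. Hence every triangle in the cone is degenerate, and the cone is an $\R$-tree, so $G_\infty$ is lacunary hyperbolic.

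The main obstacle I expect is the geometric bookkeeping in the first step. A triangle with vertices near the boundary of the isometric identification region can have sides or $\delta_i$-thinness witnesses that escape the region, and honest control of these excursions is what forces both the passage to a subsequence (upgrading one-step injectivity into injectivity into $G_\infty$) and the restriction to a fraction of $\rho_i$ (to leave room for geodesics and comparison constants). The assumption $\delta_i = o(\rho_i)$ is precisely what makes all of these overheads asymptotically negligible, and it is the only place where the quantitative relation between the injectivity radii and the hyperbolicity constants is used.
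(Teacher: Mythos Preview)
The paper does not supply its own proof of this statement: it is quoted verbatim as \cite[Theorem~1.1]{lacunary} and used as a black box, so there is nothing in the paper to compare your argument against. Your outline is essentially the standard argument from the cited reference (Ol'shanskii--Osin--Sapir): identify large balls in $\Cay(G_\infty,S_\infty)$ with balls in $\Cay(G_i,S_i)$, deduce that triangles at scale $\rho_i$ are $\delta_i$-thin, and take the cone with scaling sequence $\rho_i$ so that $\delta_i/\rho_i\to 0$ forces all triangles in the cone to degenerate to tripods.

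One point of bookkeeping you might tighten: the subsequence condition you write, $\rho_{j+1}\gg \rho_j+\delta_j$, is not quite what is needed. What you actually want is that the composed map $G_i\to G_\infty$ is injective on a ball of radius comparable to $\rho_i$; for this it suffices that $\rho_j\ge \rho_i$ for all $j\ge i$, i.e.\ that the subsequence has increasing $\rho_i$ (which is possible since $\delta_i=o(\rho_i)$ forces $\rho_i\to\infty$ once one rules out the degenerate case where the $G_i$ are eventually free). Also, the claim that the $\rho_i/2$-ball in $G_\infty$ is \emph{isometric} to that in $G_i$ needs the observation that geodesics between points of a ball $B(e,r)$ stay in $B(e,2r)$, which is exactly why you restrict to a fraction of $\rho_i$; you note this in your final paragraph, but it is worth making the constant explicit. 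With these adjustments your sketch is correct.
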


As we mentioned in the introduction, many earlier constructions of exotic values of von Neumann dimensions associated to groups involved lamplighters. Hence we record the following:

\begin{proposition}[\cite{lacunary}]
\label{lamplighters}

    Let $H$ be a non-trivial group, then $H \wr \Z$ cannot be a subgroup of a lacunary hyperbolic group.
\end{proposition}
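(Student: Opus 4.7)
I would argue by contradiction: suppose $H\wr\Z$ embeds into a lacunary hyperbolic group $G$, and exhibit an abelian subgroup of $G$ that is too large to exist inside one.

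Fix any $h\in H\setminus\{1\}$, and for each $i\in\Z$ let $h_i\in H\wr\Z$ denote the element of the base group $\bigoplus_{\Z}H$ whose only non-trivial coordinate is $h$ in position~$i$. The $h_i$ pairwise commute, so
\[A \coloneqq \langle h_i : i\in\Z\rangle \cong \bigoplus_{\Z}\langle h\rangle\]
is an infinite abelian subgroup of $G$. Depending on the order of $h$, this $A$ contains either a copy of $\Z^2$ (when $h$ has infinite order, take $\langle h_0, h_1\rangle$) or an infinite locally finite subgroup (when $h$ has finite order); in either case $A$ is not virtually cyclic.

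The core step is then to invoke from \cite{lacunary} that neither $\Z^2$ nor an infinite locally finite group can embed into a lacunary hyperbolic group. Both facts come from the structure theory there: writing $G=\colim G_n$ with $G_n$ hyperbolic and the injectivity radii of the quotient maps growing sufficiently fast compared with the hyperbolicity constants (as in \cref{lacunary}), one lifts any finite subset of $A$ to some $G_n$, where the Tits-alternative-type behaviour of commuting elements of infinite order and the uniform bound on torsion force the configuration to collapse. Alternatively, and more cleanly, one can use that every abelian subgroup of a lacunary hyperbolic group is virtually cyclic, which handles both cases uniformly.

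The main obstacle is precisely verifying which of these exclusion statements is stated in \cite{lacunary} in the form one needs: once that is pinned down, the deduction from the algebraic picture of $H\wr\Z$ above is immediate. No further input from the theory of Dehn filling developed in the preceding sections is required.
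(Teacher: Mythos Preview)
Your overall strategy matches the paper's: split according to whether $H$ contains an element of infinite order (reduce to $\Z^2$, ruled out by \cite[Corollary~3.21(a)]{lacunary}) or is torsion (reduce to $H=\Z/p$). The paper's proof is exactly this case split.

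In the torsion case, however, your proposed exclusion principles do not hold. There exist lacunary hyperbolic groups with non--virtually-cyclic abelian subgroups (for instance, the central-extension constructions in \cite{lacunary} produce lacunary hyperbolic groups whose center contains $\Q$), so it is \emph{false} that every abelian subgroup of a lacunary hyperbolic group is virtually cyclic. Likewise, a blanket exclusion of infinite locally finite subgroups is not a statement available in \cite{lacunary}, and your ``uniform bound on torsion'' sketch does not work: the bound on orders of finite subgroups in a hyperbolic group depends on the hyperbolicity constant, which grows along the approximating sequence. The paper instead cites \cite[Corollary~3.21(c) and Remark~3.22]{lacunary}, which handle $(\Z/p)\wr\Z$ directly; the argument there exploits the interaction between the infinite-order shift $t$ and the torsion elements it conjugates, not just the abelian structure of the base. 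So your reduction is correct, but in the torsion case you should invoke the lamplighter-specific result rather than a general claim about abelian or locally finite subgroups.
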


\begin{proof}
    Up to passing to a subgroup, it suffices to show this for $H = \Z$ or $H = \Z/p$ for a prime $p$. When $H = \Z$, this follows from the more general fact that a lacunary hyperbolic group cannot contain a copy of $\Z^2$ by \cite[Corollary 3.21(a)]{lacunary}. For $H = \Z/p$, it follows from \cite[Corollary 3.21(c)]{lacunary} and its proof, see \cite[Remark 3.22]{lacunary}.
\end{proof}

\section{Rationals and hyperbolic groups}

In this section we prove \cref{main hyp} and \cref{pichot}.

\subsection{Realising natural numbers}

We start with the torsion-free case, which is simpler, and sufficient for \cref{pichot}.

\begin{theorem}
\label{pichot precise}

    Let $G$ be a torsion-free cocompact lattice in the isometry group of the octonionic hyperbolic plane, and let $B \subset G$ be a finite subset. Then for every $m \in \N$, there exists a torsion-free hyperbolic quotient $G \to G_m$, injective on $B$, of cohomological dimension $16$, such that for every unitary $G_m$-representation $V$, the second homology is $H_2(G_m; V) \cong V^m$.
\end{theorem}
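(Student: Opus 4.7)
The plan is to build $G_n$ as an iterated Cohen--Lyndon Dehn filling $G = G_0 \twoheadrightarrow G_1 \twoheadrightarrow \cdots \twoheadrightarrow G_n$ of $G$, killing one special element at each step via \cref{CL element}. The strengthened inductive hypothesis I maintain is that $G_i$ is a torsion-free non-elementary hyperbolic quotient of $G$, that $G \to G_i$ is injective on $B$, that $cd_\Z(G_i) = 16$, and that $H_1(G_i; V) = 0$ and $H_2(G_i; V) \cong V^i$ for every unitary $G_i$-representation $V$. The base case $i = 0$ reduces to $G$ having property $[T_2]$ by \cref{t2 lattices}, which together with \cref{tn homology} (using that hyperbolic groups are of type $FP(\Q)$) gives $H_1(G; V) = H_2(G; V) = 0$ for every unitary $V$; the cohomological dimension $16$ is the dimension of the closed aspherical manifold covered by the octonionic hyperbolic plane.

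For the inductive step, I apply \cref{CL element} to $G_i$, taking the finite subset to be the image of $B$ in $G_i$. This produces an element $g_{i+1} \in G_i$, a Cohen--Lyndon triple $(G_i, \langle g_{i+1} \rangle, \langle g_{i+1} \rangle)$, and a torsion-free non-elementary hyperbolic quotient $G_{i+1} \coloneqq G_i/\normal{g_{i+1}}$ whose quotient map is injective on the image of $B$. Cohomological dimension is preserved by the cohomological half of \cref{excision elementary}: for every $\Z[G_{i+1}]$-module $M$ one gets $H^k(G_i; M) \cong H^k(G_{i+1}; M)$ for $k \geq 3$, which yields $cd_\Z(G_{i+1}) \leq 16$, and Poincar\'e duality applied to $G$ gives $H^{16}(G; \Z) \neq 0$, propagated to $G_{i+1}$ by iterating the excision isomorphism with trivial coefficients.

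The homological computation is the heart of the matter. I combine \cref{excision elementary} with the Lyndon--Hochschild--Serre spectral sequence for $1 \to N \to G_i \to G_{i+1} \to 1$, where $N = \normal{g_{i+1}}$. The Cohen--Lyndon decomposition $N = \ast_{g \in G_{i+1}} g\langle g_{i+1}\rangle g^{-1}$ shows that $N$ is free, so $H_q(N; V) = 0$ for $q \geq 2$. Since $V$ is pulled back from $G_{i+1}$, the subgroup $N$ acts trivially on it, and one identifies $H_1(N; V) \cong \Z[G_{i+1}] \otimes V$ with diagonal $G_{i+1}$-action. Shapiro's lemma then gives $H_p(G_{i+1}; H_1(N; V)) = 0$ for $p \geq 1$ and $= V$ for $p = 0$, so the spectral sequence collapses into a five-term exact sequence
\[0 \to H_2(G_i; V) \to H_2(G_{i+1}; V) \to V \to H_1(G_i; V) \to H_1(G_{i+1}; V) \to 0.\]
Feeding in the inductive hypothesis $H_1(G_i; V) = 0$ and $H_2(G_i; V) \cong V^i$ gives $H_1(G_{i+1}; V) = 0$ together with a short exact sequence $0 \to V^i \to H_2(G_{i+1}; V) \to V \to 0$, hence $H_2(G_{i+1}; V) \cong V^{i+1}$; this closes the induction.

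The main obstacle is ensuring the homology computation holds uniformly for every unitary $V$, not just the regular representation. The Cohen--Lyndon freeness of $N$ is what makes this possible: it annihilates $H_q(N; V)$ in degrees $q \geq 2$ and turns $H_1(N; V)$ into an induced module, so that Shapiro's lemma collapses the spectral sequence to the five-term sequence uniformly in the coefficient. Combined with the $[T_2]$ rigidity of the starting group $G$, this is what lets the induction propagate through every Dehn filling step.
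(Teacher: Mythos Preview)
Your proof is correct and follows the same overall inductive architecture as the paper: start from $G_0=G$, apply \cref{CL element} at each step, and track $H_2$ through the filling. The only substantive difference lies in how the key exact sequence
\[0 \to H_2(G_i; V) \to H_2(G_{i+1}; V) \to V \to H_1(G_i;V)\]
is obtained. The paper derives it from the second part of \cref{excision elementary}, which in turn rests on the Petrosyan--Sun excision theorem (\cref{excision}) for Cohen--Lyndon triples, and kills the rightmost term by invoking property $(T)$ for $G_i$ via \cref{tn homology}. You instead bypass the excision machinery entirely: the Cohen--Lyndon decomposition tells you $N=\normal{g_{i+1}}$ is free with $N^{\mathrm{ab}}\cong \Z[G_{i+1}]$, so the Lyndon--Hochschild--Serre $E^2$-page has only two nonzero rows and $E^2_{p,1}$ is induced, whence Shapiro's lemma collapses everything to the (extended) five-term sequence. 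This is a legitimate and more self-contained route---it reproves the relevant special case of \cref{excision elementary} by hand---at the cost of a bit more computation. Packaging $H_1(G_i;V)=0$ into the inductive hypothesis rather than re-invoking property $(T)$ at each step is equivalent to what the paper does, since property $(T)$ passes to quotients and gives the same vanishing through \cref{tn homology}.
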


\begin{proof}
    The group $G$ is torsion-free hyperbolic of cohomological dimension $16$, and by \cref{t2 lattices} it has property $[T_2]$. We construct $G_m$ by induction on $n$, starting with $G_0 = G$, which works by \cref{tn homology}.
    
    Suppose by induction that $G_m$ has been constructed. Apply \cref{induction step prescribed power} with the finite set $B$ (and any $x, H$), let $g$ be the resulting element and $G_{m+1}$ the Dehn filling quotient. Hence $G_{m+1}$ is non-elementary hyperbolic, the quotient $G \to G_{m+1}$ is injective on $B$, and $(G_m, \langle g \rangle, \langle g \rangle)$ is a Cohen--Lyndon triple. Now $G$ is the fundamental group of a closed $16$-dimensional aspherical manifold, so it has cohomological dimension $16$, and $H^{16}(G; \Z) \cong \Z$. The first part of \cref{excision elementary} therefore implies that the cohomological dimension of $G_{m+1}$ is $16$, hence in particular that $G_{m+1}$ is torsion-free. Moreover, since $G_m$ has property $(T)$, applying \cref{tn homology} and the second part of \cref{excision elementary}, we obtain, for every unitary $G_{m+1}$-representation $V$, a short exact sequence
    \[0 \to H_2(G_m; V) \to H_2(G_{m+1}; V) \to V \to 0.\]
    By the induction hypothesis the first term is $V^m$. Therefore $H_2(G_{m+1}; V) \cong V^{m+1}$, which proves the claim and hence the theorem.
\end{proof}

\begin{remark}
    The groups $G_m$ of course fail to have property $(T_2)$, despite being quotients of the $(T_2)$ group $G$, as remarked in \cite{FFF}. Still, the second homology (and similarly the second cohomology) functor enjoys some strong form of rigidity: in a way, its output can be read directly from its input, and the same will be true in the next constructions. It would be interesting to explore whether a (co)homological rigidity of this form characterises quotients of $(T_2)$ groups.
\end{remark}

\begin{proof}[Proof of \cref{pichot}]
    We have $\btt(G) = 0$ by \cref{tn betti quotient}. For each $i \in \N$, we let $B_i$ be the ball of radius $i$ in $G$, for a fixed choice of finite generating set, and $m_i \in \N$ as in the statement of \cref{pichot}. Then \cref{pichot precise} builds a group $G_i$, which by \cref{l2 vs vN} has
    \[\btt(G_i) = \dim_{\vN(G_i)} H_2(G_i; \ell^2(G_i))=\dim_{\vN(G_i)} \ell^2(G_i)^{m_i} = m_i.\]
    Since $G \to G_i$ is injective on larger and larger balls, $G_i \xrightarrow{i \to \infty} G$ in the space of marked groups. The groups $G_i$ have property $(T)$ being quotients of $G$. The other properties of $G_i$ are all contained in the statement of \cref{pichot precise}.
\end{proof}

\begin{remark}
\label{t2 sufficient}

    In \cref{pichot precise}, we used the full power of property $[T_2]$, but the proof shows that if $V$ is assumed to have no non-zero invariant vectors, then property $(T_2)$ suffices. In the proof of \cref{pichot}, we applied \cref{pichot precise} only with $V = \ell^2(G_i)$, which has no non-zero invariant vectors, so property $(T_2)$ would have been sufficient. Similarly, the computation of second $\ell^2$-Betti numbers in the rest of this section and in the next could be obtained only using property $(T_2)$.
\end{remark}

\subsection{Realising rational numbers}

Next we consider arbitrary rational numbers, completing the proof of \cref{main hyp}.

\begin{theorem}[corresponding to~\cref{main hyp}]
\label{main hyp realisation}

Let $x \in \Q_{\geq 0}$ and let $\pi$ be the set of prime factors of the denominator of $x$. Then there exists a hyperbolic group $G$ with property $(T)$ and cohomological dimension $16$ modulo $\pi$, such that $\btt(G) = x$.
\end{theorem}

\begin{proof}
Let $G_0$ be a torsion-free cocompact lattice in the isometry group of the octonionic hyperbolic plane, which has Euler characteristic $\chi \geq 1$ \cite[Corollary 5.16(1) on p.~231]{luck}. If $x = 0$, then we are done. Otherwise let $x = \frac{m}{k}$ be a reduced fraction, where $m, k \in \N$. We will build by induction a sequence of epimorphisms of hyperbolic groups $G_0 \to G_1 \to \cdots \to G_m$ and elements $g_i \in G_i$, such that:
\begin{enumerate}
\item $G_i$ is non-elementary hyperbolic with no non-trivial finite normal subgroups, and cohomological dimension $16$ modulo $\pi$.
\item The element $g_i \in G_i$ has infinite order, and for all $j > i$ its image in $G_j$ has order $k$.
\item\label{hyp item iso homology} Each quotient $G_i \to G_{i+1}$ induces isomorphisms in homology and cohomology with coefficients in $R[G_{i+1}]$-modules and degree at least $3$, where $R$ is any ring where every prime from $\pi$ is invertible.
\item For every unitary $G_i$-representation $V$, the second homology is
\[H_2(G_i; V) = \bigoplus_{j < i} V_{g_j}.\]
\end{enumerate}
The group $G = G_m$ is then non-elementary hyperbolic, with no non-trivial finite normal subgroups and cohomological dimension $16$ modulo $\pi$. It is a quotient of $G_0$ hence it has property $(T)$. Each $g_i : i < m$ has order $k$ in $G$, so by Propositions \ref{l2 vs vN} and \ref{subgroup induction}:
    \begin{align*}
        \dim_{\vN(G)} \ell^2(G)_{g_i} &= \dim_{\vN(G)} H_0(\langle g_i \rangle, \ell^2(G)) = \dim_{\vN(G)} H_0(\langle g_i \rangle, \vN(G)) \\
        &= \dim_{\vN(\langle g_i \rangle)} H_0(\langle g_i \rangle, \vN(\langle g_i \rangle)) = \btz(\langle g_i \rangle) = \frac{1}{k}.
    \end{align*}
    By \cref{l2 vs vN} and additivity we therefore have
    \[\btt(G) = \dim_{\vN(G)} H_2(G; \ell^2(G)) = \sum\limits_{0 \leq i < m} \frac{1}{k} = x.\]
By \cref{hyp item iso homology}, for all $n \geq 3$ we have
    \[\btn(G_m) = \dim_{\vN(G_x)} H_n(G_m; \vN(G_m)) = \dim_{\vN(G_m)} H_n(G_0; \vN(G_m)) = \btn(G_0 \to G_x).\]
    Since $G_m$ has property $(T)$, $\bto(G_m) = 0$ by \cite[Corollary 6]{bekkavalette}. Hence
    \begin{align*}
        \leuler(G_m) &= \sum\limits_{n \in \N} (-1)^n \btn(G_m) = \btt(G_m) + \sum\limits_{n \geq 3} (-1)^n \btn(G_m) \\
        &= \btt(G_m) + \sum\limits_{n \geq 3} (-1)^n \btn(G_0 \to G_m) = \btt(G_m) + \chi,
    \end{align*}
    where the last equality follows from \cref{tn betti quotient}.
    
It remains to construct the sequence. Suppose by induction that $G_i$ has been constructed, for some $0 \leq i < n$. Apply \cref{induction step prescribed power} with $k$ (and any $x, H$), let $g_i$ be the resulting element and $G_{i+1}$ the Dehn filling quotient; we also choose $B$ to be sufficiently large such that for all $j < i$, the element $g_j$ still has order $k$ in $G_{i+1}$. Hence $G_{i+1}$ is non-elementary hyperbolic with no non-trivial finite normal subgroup, and $(G_i, \langle g_i \rangle, \langle g_i^k \rangle)$ is a Cohen--Lyndon triple. Induction and the first part of \cref{excision elementary} imply that $G_{i+1}$ has cohomological dimension $16$ modulo $\pi$. Moreover, the second part and property $(T)$ give, for every unitary $G_{i+1}$-representation $V$, a short exact sequence
    \[0 \to H_2(G_i; V) \to H_2(G_{i+1}; V) \to V_{g_i} \to 0.\]
    This concludes the construction, hence the proof.
\end{proof}

\section{Reals and simple groups}

In this section we prove \cref{main}. Because the proof has many moving parts, we start with a simpler version, that only involves one prime, and does not give simplicity.

\begin{theorem}
\label{main simplified}

Let $p$ be a prime. There exists a family of groups $(G_x)_{x \in \R_{>0}}$ and an integer $\chi \ge 1$ with the following properties.
\begin{enumerate}
    \item $G_x$ is simple, has property $(T)$, and is lacunary hyperbolic.
    \item $G_x$ has cohomological dimension $16$ modulo $p$.
    \item  The second $\ell^2$-Betti number is $\btt(G_x) = x$.
    \item The $\ell^2$-Euler characteristic is $\leuler(G_x)= \btt(G_x)+\chi$.
\end{enumerate}
\end{theorem}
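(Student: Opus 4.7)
My plan is to realise each $G_\alpha$ as the colimit of an infinite sequence of group-theoretic Dehn fillings starting from a torsion-free cocompact lattice $G_0$ in the isometry group of $\mathbb{H}^2_{\mathbb{O}}$, encoding $\alpha$ into the exponents of the finite cyclic $p$-groups introduced at each stage. Take $A \coloneqq \{0,1\}^\N$; for each $\alpha=(\alpha_i)\in A$ I inductively build epimorphisms $G_0 \twoheadrightarrow G_1^\alpha \twoheadrightarrow G_2^\alpha \twoheadrightarrow \cdots$ of non-elementary hyperbolic groups without non-trivial finite normal subgroups, by using \cref{special element} at step $i$ to pick a special element $g_i^\alpha\in G_{i-1}^\alpha$ (so $E_{G_{i-1}^\alpha}(g_i^\alpha)=\langle g_i^\alpha\rangle\cong\Z$) whose word length dominates the current hyperbolicity constant, and setting $G_i^\alpha\coloneqq G_{i-1}^\alpha/\normal{(g_i^\alpha)^{p^{k_i^\alpha}}}$ with $k_i^\alpha\coloneqq 2M_i+\alpha_i$, where $M_i\to\infty$ grows fast enough at each stage for \cref{induction step} to apply, for \cref{osin} to propagate the embeddings $\bar P_j^\alpha\hookrightarrow G_\alpha\coloneqq\colim_iG_i^\alpha$ for every $j\leq i$, and for the sum $\sum 1/p^{2M_i+\alpha_i}$ to depend injectively on $\alpha$. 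Then $\bar P_i^\alpha\cong\Z/p^{k_i^\alpha}$ by design, and since $|\bar P_i^\alpha|\to\infty$ the colimit $G_\alpha$ is infinite.

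Property $(T)$ of $G_\alpha$ is inherited from $G_0$, and lacunary hyperbolicity follows from \cref{lacunary} after enforcing $|g_i^\alpha|\gg\delta_{i-1}$ and inductively raising the injectivity radii of the quotients $G_i^\alpha\to G_{i+1}^\alpha$ via the finite set~$B$ in \cref{induction step}. For cohomological dimension $16$ modulo $p$: over any ring $R$ in which $p$ is invertible, $|\bar P_i^\alpha|=p^{k_i^\alpha}$ is invertible, so \cref{excision elementary} inductively gives $cd_R(G_i^\alpha)=16$ and isomorphisms $H^n(G_i^\alpha;M)\cong H^n(G_{i-1}^\alpha;M)$ for all $n\geq 3$ and every $R[G_\alpha]$-module $M$; combined with $H^{16}(G_0;R)\neq 0$ (since $G_0$ is a Poincar{\'e} duality group of dimension $16$), \cref{dimensions} concludes $cd_R(G_\alpha)=16$.

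The heart of the argument is the second $\ell^2$-Betti number computation, carried out with coefficients $M=\vN(G_\alpha)$ viewed as a $G_{i-1}^\alpha$-module via the quotient. The exact sequence of \cref{excision elementary} becomes
\[ 0\to H_2(G_{i-1}^\alpha;\vN(G_\alpha))\to H_2(G_i^\alpha;\vN(G_\alpha))\to \vN(G_\alpha)_{\bar P_i^\alpha}\to H_1(G_{i-1}^\alpha;\vN(G_\alpha)),\]
where, thanks to the preserved embedding $\bar P_i^\alpha\hookrightarrow G_\alpha$, averaging identifies $\vN(G_\alpha)_{\bar P_i^\alpha}$ with the invariants $e_{\bar P_i^\alpha}\vN(G_\alpha)$ of $\vN(G_\alpha)$-dimension $1/p^{k_i^\alpha}$, while property $(T)$ of $G_{i-1}^\alpha$ (inherited from $G_0$) combined with infiniteness of $G_\alpha$ forces $\dim_{\vN(G_\alpha)}H_1(G_{i-1}^\alpha;\vN(G_\alpha))=0$ via \cref{tn betti quotient}. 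Additivity of dimension yields $\dim H_2(G_i^\alpha;\vN(G_\alpha))=\dim H_2(G_{i-1}^\alpha;\vN(G_\alpha))+1/p^{k_i^\alpha}$, with base case $\dim H_2(G_0;\vN(G_\alpha))=0$ coming from $[T_2]$ of $G_0$ via \cref{tn betti quotient}. Since the transition maps in this directed system are injective and homology commutes with filtered colimits (\cref{colimit homology}),
\[ \btt(G_\alpha)=\sum_{i\geq 1}\frac{1}{p^{2M_i+\alpha_i}},\]
which injects $A$ into $\R$ by our growth assumption on $M_i$. For the Euler characteristic, $\btz(G_\alpha)=\bto(G_\alpha)=0$ by infiniteness and $(T)$, and for $n\geq 3$ the iterated isomorphism from \cref{excision elementary} together with \cref{colimit homology} gives $\betti_n(G_\alpha)=\dim H_n(G_0;\vN(G_\alpha))$, finite since $G_0$ is $FP(\Q)$. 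Applying \cref{Euler Poincare formula} to $G_0$ with the tracial representation pulled back from the regular representation of $G_\alpha$, and using $\betti_i(G_0\to G_\alpha)=0$ for $i\leq 2$ by $[T_2]$ of $G_0$, yields $\leuler(G_\alpha)=\btt(G_\alpha)+\chi(G_0)$; finally $\chi(G_0)\geq 1$ follows from Hirzebruch proportionality in real dimension $16=2\cdot 8$ together with integrality of $\chi(G_0)$ ($G_0$ being torsion-free of type $FP(\Z)$).

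The main obstacle is that the inductive step apparently requires vanishing of $H_1(G_{i-1}^\alpha;\vN(G_\alpha))$, which on its face would demand some form of $(T_2)$ for $G_{i-1}^\alpha$---a property genuinely lost after even a single Dehn filling. The resolution is to use coefficients in the von Neumann algebra of the final colimit rather than the stagewise $\vN(G_{i-1}^\alpha)$: then plain property $(T)$ of $G_{i-1}^\alpha$, which is preserved under quotients from $G_0$, combined with infiniteness of the quotient $G_\alpha$, suffices via \cref{tn betti quotient}, and the stronger $[T_2]$ of $G_0$ is needed only at the base of the induction and for extracting the Euler characteristic formula.
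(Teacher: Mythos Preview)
Your proof is correct and follows essentially the same approach as the paper: iterated Cohen--Lyndon Dehn fillings of a torsion-free cocompact lattice in $\mathrm{Isom}(\mathbb{H}^2_{\mathbb{O}})$ along special elements with $p$-power exponents, using \cref{excision elementary} for both the cohomological dimension and the $H_2$ recursion, and \cref{tn betti quotient} together with \cref{Euler Poincare formula} for the Euler characteristic. The only differences are cosmetic---your encoding via $k_i=2M_i+\alpha_i$ versus the paper's gap-length encoding in $\{2,3\}^\N$, and your tracking of $\vN$-dimensions directly rather than the paper's module-level statement $H_2(G_i;V)\cong\bigoplus_{j<i}V_{g_j}$; one small correction is that your final paragraph overstates the ``obstacle'': vanishing of $H_1(G_{i-1}^\alpha;V)$ for unitary $V$ needs only property $(T)=(T_1)$ (which survives all quotients), not $(T_2)$, and indeed the paper uses exactly this to get a genuine short exact sequence rather than merely a dimension identity.
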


\begin{proof}
    We fix a positive real $x \in \R_{>0}$ and choose a base-$p$ representation
    \[\sum\limits_{i \geq 1} \frac{1}{p^{k_i}},\]
    	for some $k_i \in \N$. To treat all numbers in a unified way, if $x$ is $p$-adic, we still choose a periodic representation.
    The group $G_x$ will arise as the colimit of a direct sequence of epimorphisms of hyperbolic groups $G_0 \to G_1 \to \cdots$, where $G_0$ is a torsion-free cocompact lattice in the isometry group of the octonionic hyperbolic plane, which has Euler characteristic $\chi \geq 1$ \cite[Corollary 5.16(1) on p.~231]{luck}. We fix a finite generating set $S_0$ for $G_0$ and let $S_i$ be the pushforward to $G_i$. We also specify elements $g_i \in G_i$, and will prove the following additional properties.
    \begin{enumerate}
        \item\label{MS item hyp} $G_i$ is non-elementary hyperbolic with no non-trivial finite normal subgroups.
        \item\label{MS item inj} $\mathrm{Cay}(G_i, S_i)$ is $\delta_i$-hyperbolic, and the quotient $G_i \to G_{i+1}$ is injective on the ball of radius $i \delta_i$ centred at the identity.
        \item\label{MS item iso homology} Each quotient $G_i \to G_{i+1}$ induces isomorphisms in homology and cohomology with coefficients in $R[G_{i+1}]$-modules and degree at least $3$, where $R$ is any ring where $p$ is invertible.
        \item\label{MS item elements} The element $g_i \in G_i$ has infinite order, and for all $j > i$ its image in $G_j$ has order $p^{k_i}$.
        \item\label{MS item H2} For every unitary $G_i$-representation $V$, the second homology is
        \[H_2(G_i; V) = \bigoplus_{j < i} V_{g_j}.\]
    \end{enumerate}
    
    Let us first prove that the colimit $G_x$ has the desired properties. The group $G_0$ has property $(T)$ by \cref{t2 lattices}, hence so does $G_x$, being its quotient. \cref{MS item inj} implies that $G_x$ is lacunary hyperbolic, by \cref{lacunary}. \cref{MS item iso homology} and \cref{dimensions} imply that $G_x$ has cohomological dimension $16$ modulo $p$.

    Next, we compute $\btt(G_x)$. By \cref{colimit homology}, we have
    \[H_2(G_x; \ell^2(G_x)) \cong \bigoplus_{i \geq 1} \ell^2(G_x)_{g_i}.\]
    Each $g_i$ has order $p^{k_i}$ in $G_x$, hence by Propositions \ref{l2 vs vN} and \ref{subgroup induction}:
    \begin{align*}
        \dim_{\vN(G_x)} \ell^2(G_x)_{g_i} &= \dim_{\vN(G_x)} H_0(\langle g_i \rangle, \ell^2(G_x)) = \dim_{\vN(G_x)} H_0(\langle g_i \rangle, \vN(G_x)) \\
        &= \dim_{\vN(\langle g_i \rangle)} H_0(\langle g_i \rangle, \vN(\langle g_i \rangle)) = \btz(\langle g_i \rangle) = p^{-k_i}.
    \end{align*}
    By \cref{l2 vs vN} and \cref{direct sums}, we therefore have
    \[\btt(G_x) = \dim_{\vN(G_x)} H_2(G_x; \ell^2(G_x)) = \sum\limits_{i \geq 1} \frac{1}{p^{k_i}} = x.\]
    
    Finally, we compute $\leuler(G_x)$. By \cref{MS item iso homology} and \cref{colimit homology}, for all $n \geq 3$ we have
    \[\btn(G_x) = \dim_{\vN(G_x)} H_n(G_x; \vN(G_x)) = \dim_{\vN(G_x)} H_n(G_0; \vN(G_x)) = \btn(G_0 \to G_x).\]
    Since $G_x$ has property $(T)$, $\bto(G_x) = 0$ by \cite[Corollary 6]{bekkavalette}. Hence
    \begin{align*}
        \leuler(G_x) &= \sum\limits_{n \in \N} (-1)^n \btn(G_x) = \btt(G_x) + \sum\limits_{n \geq 3} (-1)^n \btn(G_x) \\
        &= \btt(G_x) + \sum\limits_{n \geq 3} (-1)^n \btn(G_0 \to G_x) = \btt(G_x) + \chi,
    \end{align*}
    where the last equality follows from \cref{tn betti quotient}.
    
    \medskip

    It remains to construct the directed sequence $G_0 \to G_1 \to \cdots$ and the elements $g_i \in G_i$ so that the items above are satisfied. Starting with $G_0$, note that \cref{MS item H2} holds by \cref{tn homology}, because $G_0$ has property $[T_2]$, and there is nothing to check for the other items, before $g_0$ has been chosen. So suppose by induction that $G_0, \ldots, G_i$ and the elements $g_0, \ldots, g_{i-1}$ have been constructed so that all items above are satisfied. Apply \cref{induction step prescribed power} with $k = k_i$, a finite set $B$ to be specified later (and any $x, H$). Let $g_i$ be the resulting element and $G_{i+1}$ the Dehn filling quotient. \cref{induction step prescribed power} directly gives \cref{MS item hyp}. By choosing $B$ large enough, we can ensure that \cref{MS item inj} is satisfied, and that the elements $g_j : j < i$ maintain the same order in $G_{i+1}$; this together with \cref{induction step prescribed power} gives \cref{MS item elements}.
    
    Because $(G_i, \langle g_i \rangle, \langle g_i^k \rangle)$ is Cohen--Lyndon, again by \cref{induction step prescribed power}, we can apply \cref{excision elementary}, the first part gives \cref{MS item iso homology}. The second part, together with the fact that $G_i$ has property $(T)$, gives a short exact sequence
    \[0 \to H_2(G_i; V) \to H_2(G_{i+1}; V) \to V_{g_i} \to 0.\]
    The induction hypothesis gives
    \[H_2(G_{i+1}; V) = \bigoplus\limits_{j \leq i} V_{g_j},\]
    proving \cref{MS item H2}. This concludes the construction, hence the proof.
\end{proof}

We now prove \cref{main}. The proof will be similar, but more technical, in that we need to leverage an additional prime, and the full power of \cref{induction step prescribed power}, to make the group $G_x$ simple. This improvement employs a trick from \cite{osinthom}, where it was used to construct simple groups with large first $\ell^2$-Betti numbers. The following will be our starting point, this is also the only place where we use \cref{induction step large power} for the induction step.

\begin{lemma}
\label{main starting point}

    Let $G$ be a torsion-free cocompact lattice in the isometry group of the octonionic hyperbolic plane. Let $q$ be a prime and let $\varepsilon > 0$. Then there exists a quotient $G \to G_0$ with the following properties.
    \begin{enumerate}
        \item $G_0$ is non-elementary hyperbolic with no non-trivial finite normal subgroups.
        \item The quotient $G \to G_0$ induces isomorphisms in homology and cohomology with coefficients in $R[G_0]$-modules and degree at least $3$, where $R$ is any ring where $q$ is invertible.
        \item $G_0$ is generated by elements $\{ a_1, \ldots, a_m \}$, each of which has order a power of $q$.
        \item There is a finite set $\mathcal{E}$ of finite subgroups of $G_0$, such that for every unitary $G_0$-representation $V$, the second homology is
        \[H_2(G_0; V) = \bigoplus_{E \in \mathcal{E}} V_E,\]
        and moreover
        \[ \sum\limits_{E \in \mathcal{E}} |E|^{-1} < \varepsilon.\]
    \end{enumerate}
\end{lemma}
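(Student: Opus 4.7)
The plan is to realise $G_1$ as a single simultaneous group-theoretic Dehn filling of $G$ along a carefully chosen peripheral structure, so that all of (1)--(4) follow from one application of the Cohen--Lyndon / excision machinery in its multi-peripheral form. First, starting from a finite generating set $\{y_1,\dots,y_m\}$ of $G$ and using that $G$ is torsion-free hyperbolic, I would replace each $y_i$ by its primitive root $a_i \in G$; since $y_i \in \langle a_i \rangle$ the set $\{a_1,\dots,a_m\}$ still generates $G$, and one has $E_G(a_i)=\langle a_i\rangle$. A standard perturbation argument---replacing commensurable pairs $(a_i, a_j)$ by primitive roots of generic products, exploiting the non-elementarity of $G$---ensures that the $a_i$ can be chosen pairwise non-commensurable. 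I would then adjoin one extra special element $a_0 \in G$ not commensurable with any $a_i$ via \cref{special element}; $a_0$ will be used only to rule out finite normal subgroups in the quotient.

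Next, the family $\{\langle a_i\rangle\}_{i=0}^m$ is hyperbolically embedded in $G$, so $G$ is hyperbolic relative to it (extending \cref{cyclic RH} to the pairwise non-commensurable multi-peripheral setting, as in \cite{osin:memoirs}). For sufficiently large integers $b_0,b_1,\dots,b_m$, the multi-peripheral Cohen--Lyndon theorem of Petrosyan--Sun \cite{PS1} applies to the tuple $(G, \{\langle a_i\rangle\}, \{N_i\})$ with $N_i := \langle a_i^{q^{b_i}}\rangle$, and I would set
\[ G_1 \;:=\; G \,/\, \normal{\textstyle\bigcup_{i=0}^m N_i}. \]
Then \cref{osin} gives that $G_1$ is hyperbolic relative to the finite cyclic quotients $\bar P_i := \langle a_i\rangle/N_i \cong \Z/q^{b_i}$, which embed into $G_1$, so $G_1$ is hyperbolic. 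The image $\bar P_0$ is almost malnormal by \cite[Theorem 1.4]{osin:memoirs}, so any non-trivial finite normal subgroup of $G_1$ would be centralised by a non-trivial element of $\bar P_0$, contradicting almost malnormality; together with non-elementarity this yields~(1). Since $G\to G_1$ is surjective, $G_1$ is generated by the images of $a_0,a_1,\dots,a_m$, each of order $q^{b_i}$, which gives~(3).

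For (2) and (4), I would invoke the multi-peripheral version of \cref{excision} applied to the Cohen--Lyndon tuple constructed above. Since each $[\langle a_i\rangle:N_i] = q^{b_i}$ is a power of $q$ and hence invertible in every ring $R$ in which $q$ is invertible, collapsing the relative homology of each finite peripheral $\bar P_i$ in positive degrees---as in the proof of \cref{excision elementary}---yields isomorphisms $H_n(G_1;M)\cong H_n(G;M)$ and $H^n(G_1;M)\cong H^n(G;M)$ for all $n\geq 3$ and $R[G_1]$-modules $M$, which is~(2). For $H_2$, property $[T_2]$ of $G$ (\cref{t2 lattices}) together with \cref{tn homology} give $H_1(G;V)=H_2(G;V)=0$ for every unitary representation $V$, so the multi-peripheral analogue of the second exact sequence in \cref{excision elementary} collapses to
\[ H_2(G_1;V) \;\cong\; \bigoplus_{i=0}^m V_{\bar P_i} \;=\; \bigoplus_{E\in\mathcal{E}} V_E, \qquad \mathcal{E} := \{\bar P_i\}_{i=0}^m, \]
which is~(4).

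The main obstacle I expect is the first step of the plan, arranging that the (primitive roots of a) generating set be pairwise non-commensurable: two primitive elements are commensurable only if they are conjugate, and I would need to break such coincidences by multiplying one generator with an element whose translation axis is sufficiently transverse, which generically produces a new primitive element not conjugate to the others while preserving the generation property. This is a standard genericity argument in hyperbolic geometry, and the bookkeeping is the only non-routine ingredient beyond the multi-peripheral Petrosyan--Sun machinery and the vanishing of $H_1, H_2$ of $G$ with unitary coefficients coming from property $[T_2]$.
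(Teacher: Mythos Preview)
Your overall strategy—a single simultaneous multi-peripheral Dehn filling rather than the paper's iterated one-generator-at-a-time construction—is a legitimate alternative, and items (2), (3), (4) would indeed follow from the multi-peripheral Cohen--Lyndon and excision theorems together with property $[T_2]$, essentially as you outline.

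However, your argument for item (1) has a genuine gap. You fill the auxiliary peripheral $\langle a_0\rangle$ nontrivially, so $\bar P_0 \cong \Z/q^{b_0}$ is \emph{finite}. Almost malnormality of a finite subgroup is a vacuous condition (the intersections $\bar P_0 \cap g\bar P_0 g^{-1}$ are automatically finite), so it cannot be used to exclude nontrivial finite normal subgroups of $G_1$; the argument you borrow from the proof of \cref{induction step} works there precisely because the surviving peripheral is infinite. The fix is to take the trivial filling $N_0 = \{1\}$ on the auxiliary peripheral, so that $\bar P_0 \cong \Z$ persists as an infinite almost malnormal subgroup of $G_1$; then the argument of \cref{induction step} goes through verbatim and yields both non-elementarity and the absence of finite normal subgroups. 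One must then check that this extra infinite peripheral does not spoil the $H_2$-computation in (4): it does not, because the contributions of $P_0 = \bar P_0$ to the two long exact sequences match under the excision isomorphism and cancel, leaving $H_2(G_1;V) \cong \bigoplus_{i=1}^m V_{\bar P_i}$.

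For comparison, the paper builds $G_1$ by $m$ successive single-peripheral fillings $G = G_{0/m} \to G_{1/m} \to \cdots \to G_{m/m} = G_1$, at each stage killing a $q$-power of one generator and invoking \cref{induction step} and \cref{excision elementary} as stated. This sidesteps both the bookkeeping of arranging a pairwise non-conjugate primitive generating set (your acknowledged obstacle) and the need for multi-peripheral versions of \cref{excision} and \cref{excision elementary}, which are not formulated in the paper. Your route is conceptually cleaner once those ingredients are in hand, but it front-loads more setup.
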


\begin{proof}
    We start by choosing a generating set $\{a_1^0, \ldots, a_m^0\}$ of $G$. For $i = 1, \ldots, m$, we construct by induction a quotient $G \to G^i$ with the following properties.
    \begin{enumerate}
        \item\label{SP item hyp} $G^i$ is non-elementary hyperbolic with no non-trivial finite normal subgroups.
        \item\label{SP item iso} The quotient $G \to G^i$ induces isomorphisms in homology and cohomology with coefficients in $R[G^i]$-modules and degree at least $3$, where $R$ is any ring where $q$ is invertible. In particular $G^i$ has cohomological dimension $16$ modulo $q$, and so it only contains $q$-torsion \cite[Proposition 4.11 on p.~63]{bieri}.
        \item\label{SP item gen} $G^i$ is generated by elements $\{a_1^i, \ldots, a_m^i\}$, where $a_1^i, \ldots, a_i^i$ have order a power of $q$.
        \item\label{SP item H2} There is a finite set of finite subgroups $\mathcal{E}_i$ of $G^i$, such that for every unitary $G^i$-representation $V$, the second homology is
        \[H_2(G^i; V) = \bigoplus_{E \in \mathcal{E}_i} V_E,\]
        and moreover
        \[ \sum\limits_{E \in \mathcal{E}_i} |E|^{-1} < \varepsilon.\]
    \end{enumerate}
    Then the final group $G^m$ will be the group $G_0$ in the statement of the lemma, with $\mathcal{E} = \mathcal{E}_m$ and $a_j = a_j^m$. The base case for $i = 0$ is $G$ itself, with $\mathcal{E} = \emptyset$, where we do not need to check anything for the first three items, and \cref{SP item H2} follows from \cref{tn homology} and \cref{t2 lattices}.

    Suppose by induction that $G^i$ has been constructed. If $a_{i+1}^i$ is torsion, then it is $q$-torsion, so we set $G^{i+1} = G^i$, $a^{i+1}_j = a^i_j$ for all $j$, and $\mathcal{E}_{i+1} = \mathcal{E}_i$. Otherwise, consider the infinite virtually cyclic group $E_{G^i}(a_{i+1})$. This has a unique maximal finite normal subgroup of order a power of $q$, and the quotient is isomorphic to $\Z$ (or possibly to $D_\infty$, in case $q = 2$). We replace $a_{i+1}^i$ with a root $a^{i+1}_{i+1}$ that generates the $\Z$ quotient (or the index-$2$ subgroup of the $D_\infty$ quotient), so that $[E_{G^i}(a^{i+1}_{i+1}) : \langle a^{i+1}_{i+1} \rangle]$ is a power of $q$. We now let $G^{i+1}$ be the group-theoretic Dehn filling for the triple $(G^i, E_{G^i}(a^{i+1}_{i+1}), \langle (a^{i+1}_{i+1})^k \rangle)$, where $k$ is a sufficiently large power of $q$. By \cref{induction step large power}, we see that $G^{i+1}$ satisfies Items \ref{SP item hyp} and \ref{SP item gen}, choosing $a^{i+1}_j = a^i_j$ for all $j \neq i+1$. We also choose $k$ to be large enough so that the orders of $a_1^i, \ldots, a_i^i$ and their images $a_1^{i+1}, \ldots, a_i^{i+1}$ have the same order, and each finite subgroup from $\mathcal{E}_i$ embeds into $G^{i+1}$. Moreover, by \cref{induction step large power} again, this triple is Cohen--Lyndon, so \cref{excision elementary} applies, and proves \cref{SP item iso}. Finally, let $E \coloneqq E_{G^i}(a_{i+1}^{i+1}) / \langle (a_{i+1}^{i+1})^k \rangle$. \cref{excision elementary} and property $(T)$ for $G^i$ give a short exact sequence
    \[0 \to H_2(G^i; V) \to H_2(G^{i+1}; V) \to V_E \to 0,\]
    so setting $\mathcal{E}_{i+1} = \mathcal{E}_i \cup \{ E \}$, we obtain the first part of \cref{SP item H2}. The second part follows again by induction, if we choose $k$ to be sufficiently large, and we conclude.
\end{proof}

\begin{proof}[Proof of \cref{main}]
    We proceed as in the proof of \cref{main simplified}, using the prime $p \neq q$, but starting with the group $G_0$ from \cref{main starting point}. The result will be a group $G_x$ with the same properties as in \cref{main simplified}, with the exception that cohomological dimension now has to be taken modulo $\{p, q\}$, and $\btt(G_x)$ is computed starting from
    \[\btt(G_0 \to G_x) = \dim_{\vN(G_x)} \Bigl(\bigoplus_{E \in \mathcal{E}} \vN(G_\alpha)_E\Bigr) = \sum\limits_{E \in \mathcal{E}} |E|^{-1}.\]
    We let this value be $\varepsilon < x$. The same computations go through, where now work with a base-$p$ representation of $x - \varepsilon$ instead. For the finiteness of the higher $\ell^2$-Betti numbers of~$G_x$ we refer to~\cref{singer remark}. 

    In each induction step in the proof of \cref{main simplified}, we chose special elements $g_i \in G_i$, using \cref{induction step prescribed power} with arbitrary $x, H$. We will now explain how to use the full power of \cref{induction step prescribed power} in the construction so that this time the colimit is simple. Enumerate the set $\mathcal{A} \coloneqq \{a_1, \ldots, a_m \} \times G_0$. When the group $G_i$ has been built, let $(a, h)$ denote the $i$-th element of $\mathcal{A}$. If $h$ is trivial, then we pick $g_i$ to be an arbitrary special element, and proceed as before. Otherwise, we use \cref{induction step prescribed power} to pick $g_i \in a^{-1} \normal{h}_{G_i}$, which is possible since $\normal{h}_{G_i}$ is suitable by \cref{normal suitable}. We claim that now $a \in \normal{h}_{G_{i+1}} \leq G_{i+1}$. Indeed, consider the image of $a$ in the quotient $G_{i+1}/\normal{h}_{G_{i+1}}$. On the one hand, it has order a power of $q$, because $a$ has order a power of $q$ in $G_0$. On the other hand, it equals the image of $g_i$, which has order a power of $p$ in $G_{i+1}$, by construction of the quotient $G_i \to G_{i+1}$. Because $p \neq q$, we see that $a$ must be trivial in $G_{i+1}/\normal{h}_{G_{i+1}}$, which proves our claim. 

    As a result, for every $(a, h) \in \mathcal{A}$, either $h$ is trivial in $G_x$, or $a$ lies in the normal closure of $h$ in $G_x$. Because $a$ ranges over a generating set for $G_0$ and $h$ ranges over $G_0$, this shows that every non-trivial element of $G_x$ normally generates, that is, $G_x$ is simple.
\end{proof}

\begin{remark}\label{singer remark}
    Note that the constructions give some information about higher $\ell^2$-Betti numbers as well. Indeed, $G_x$ is obtained from a torsion-free cocompact lattice in the isometry group of the octonionic hyperbolic plane, denoted $G$, by a sequence of group-theoretic Dehn fillings satisfying \cref{excision elementary}. Using this and \cref{colimit homology} we have
    \[\btn(G_x) = \btn(G \to G_x)\]
    for all $n \geq 3$. Since $G$ is of type $F$, this implies that all $\ell^2$-Betti numbers of $G_x$ are finite. We know from \cref{t2 lattices} that $G$ has property $(T_3)$, so \cref{tn betti quotient} gives $\bttt(G_x) = 0$.

    Moreover, $G$ satisfies the Singer conjecture \cite[Corollary 5.16 on p.~231]{luck}, that is $\btn(G) = 0$ for all $n \neq 8$ and $\betti_8(G) = \chi \geq 1$. This does not give information about $\btn(G \to G_x)$, however, assuming L{\"u}ck's approximation conjecture \cite[Chapter~13]{luck}, and repeating the construction ensuring that $G \to G_x$ is injective on a large ball centred at the identity, we would have that $\btn(G\to G_x)$ is close to $\btn(G)$. It would then follow that $\btn(G_x)$ is close to $0$, for $n \geq 4, n \neq 8$, and close to $\chi$ for $n = 8$.
\end{remark}

\begin{remark}
    Taking $V = \C$ to be the trivial unitary representation, the proofs of Theorems~\ref{main simplified} and~\ref{main}, and \cref{colimit homology}, show that $H_2(G_x; \C)$ is infinite-dimensional. It follows that the groups $G_\alpha$ do not have type $FP_2(\Q)$.
\end{remark}

\section{An elementary construction}

We conclude with the elementary construction giving \cref{suspension}. We start with two basic product formulas.

\begin{proposition}[{\cite[Theorem 6.54(5) on p.~266]{luck}}]
\label{direct product}

    Let $G_1, G_2$ be two groups. Then
    \[\btn(G_1 \times G_2) = \sum\limits_{i+j = n} \bti(G_1) \btj(G_2).\]
\end{proposition}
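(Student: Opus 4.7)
The plan is to reduce the statement to the classical Künneth theorem applied at the level of Lück's dimension. The two key inputs are: the group von Neumann algebra of a product factorises as a von Neumann tensor product, $\vN(G_1 \times G_2) \cong \vN(G_1) \bar{\otimes} \vN(G_2)$; and Lück's dimension is multiplicative on external tensor products, that is,
\[\dim_{\vN(G_1) \bar{\otimes} \vN(G_2)}(M \otimes N) = \dim_{\vN(G_1)}(M) \cdot \dim_{\vN(G_2)}(N),\]
for arbitrary modules $M$ and $N$, with the convention $0 \cdot \infty = 0$.

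Concretely, I would pick free resolutions $P_\ast$ of $\C$ over $\C[G_1]$ and $Q_\ast$ of $\C$ over $\C[G_2]$. Since $\C$ is a field, $P_\ast \otimes_\C Q_\ast$ is a free resolution of $\C$ over $\C[G_1] \otimes_\C \C[G_2] = \C[G_1 \times G_2]$. Tensoring with $\vN(G_1 \times G_2)$ and using the factorisation above yields the external tensor product of the chain complexes $\vN(G_1) \otimes_{\C[G_1]} P_\ast$ and $\vN(G_2) \otimes_{\C[G_2]} Q_\ast$. Because $\C$ is a field, the classical Künneth formula gives
\[H_n\bigl(G_1 \times G_2; \vN(G_1 \times G_2)\bigr) \cong \bigoplus_{i+j=n} H_i\bigl(G_1; \vN(G_1)\bigr) \otimes_\C H_j\bigl(G_2; \vN(G_2)\bigr).\]
Taking Lück's dimension on both sides and invoking multiplicativity, and summing over $i+j=n$, yields the claimed formula.

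The main obstacle is the subtle distinction between the algebraic tensor product $\vN(G_1) \otimes_\C \vN(G_2)$ and the von Neumann tensor product $\vN(G_1) \bar{\otimes} \vN(G_2)$: these agree only as a dense inclusion of algebras, and one must ensure that this discrepancy does not affect dimensions. This is handled by Lück's extension of the dimension function to arbitrary modules, together with the fact that modules of zero dimension are preserved under the relevant extension of scalars. One also needs care with the conventions when some of the Betti numbers are infinite, which is again built into Lück's dimension theory.
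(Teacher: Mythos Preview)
The paper does not prove this proposition; it is quoted as a citation to L\"uck's book and used as a black box. Your sketch is essentially the argument given in the cited reference: factorise $\vN(G_1\times G_2)\cong \vN(G_1)\,\bar\otimes\,\vN(G_2)$, tensor a product of free resolutions, apply the K\"unneth formula over~$\C$, and then use multiplicativity of L\"uck's dimension for external tensor products. You have also correctly isolated the one genuine technical point---the discrepancy between the algebraic and the von Neumann tensor product---which in L\"uck's treatment is handled via his extended dimension function and the fact that the inclusion $\vN(G_1)\otimes_\C\vN(G_2)\hookrightarrow \vN(G_1)\,\bar\otimes\,\vN(G_2)$ is a dimension-preserving flat ring extension in the relevant sense. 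So your proposal is correct and aligned with the source the paper cites.
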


\begin{proposition}
\label{free product}

    Let $(G_i)_{i \in \N}$ be infinite groups, and let $G$ be their free product. Then $\bto(G) = \infty$ and
    \[\btn(G) = \sum\limits_{i \in \N} \btn(G_i)\]
    for all $n \geq 2$.
\end{proposition}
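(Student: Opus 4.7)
The approach is to exhibit $G = \colim_k G^{(k)}$ where $G^{(k)} \coloneqq G_1 \ast \cdots \ast G_k$ is the finite free product, and leverage the Mayer--Vietoris long exact sequence for the amalgamated free product $G^{(k+1)} = G^{(k)} \ast G_{k+1}$, viewed as a pushout over the trivial group. For any $\Z[G]$-module $M$, this sequence reads
\[\cdots \to H_n(\{1\}; M) \to H_n(G^{(k)}; M) \oplus H_n(G_{k+1}; M) \to H_n(G^{(k+1)}; M) \to H_{n-1}(\{1\}; M) \to \cdots.\]
Combined with \cref{colimit homology}, which gives $H_n(G; M) \cong \colim_k H_n(G^{(k)}; M)$, this lets us compute both statements.

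For $n \geq 2$, the outer terms vanish and Mayer--Vietoris produces a split isomorphism $H_n(G^{(k+1)}; M) \cong H_n(G^{(k)}; M) \oplus H_n(G_{k+1}; M)$, where the transition map of the directed system is the inclusion into the first summand. Iterating and taking the directed colimit, which commutes with direct sums, yields $H_n(G; M) \cong \bigoplus_{i\in\N} H_n(G_i; M)$. Setting $M=\vN(G)$ and combining \cref{direct sums} with \cref{subgroup induction} gives the desired formula $\btn(G) = \sum_i \btn(G_i)$.

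For $n = 1$, the Mayer--Vietoris with $M = \vN(G)$ terminates on the right with the coinvariants $\vN(G)_{G^{(k)}}$, $\vN(G)_{G_{k+1}}$, $\vN(G)_{G^{(k+1)}}$. By \cref{subgroup induction}, each of these has $\vN(G)$-dimension equal to $\btz$ of the infinite group in question, hence zero. Additivity of L\"uck's dimension along the exact sequence then forces
\[\bto(G^{(k+1)}) = \bto(G^{(k)}) + \bto(G_{k+1}) + 1,\]
the extra $+1$ coming from $\dim_{\vN(G)} H_0(\{1\}; \vN(G)) = 1$. Inductively $\bto(G^{(k)}) \geq k-1$. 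The Mayer--Vietoris also exhibits the transition map $H_1(G^{(k)};\vN(G)) \to H_1(G^{(k+1)};\vN(G))$ as an inclusion into the first summand of a kernel, hence injective; since filtered colimits of modules are exact, the composed map into $H_1(G;\vN(G))$ is injective as well, so $\bto(G) \geq k-1$ for every $k$.

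The main obstacle is bookkeeping rather than conceptual: one must verify that the Mayer--Vietoris long exact sequence applies with arbitrary $G$-module coefficients in the homology of groups (standard via the wedge decomposition of classifying spaces, cf.~Brown), and that filtered colimits interact correctly with the relevant direct sums and injections. The genuine subtlety is the $n=1$ case: unlike $n \geq 2$, the Mayer--Vietoris does not collapse to a direct sum, and the recurring contribution from $H_0(\{1\};\vN(G))$ is precisely what makes $\bto(G^{(k)})$ grow at least linearly in $k$, driving $\bto(G)$ to infinity.
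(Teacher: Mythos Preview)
Your proof is correct and follows essentially the same approach as the paper: induct on finite free products $G^{(k)}$ via Mayer--Vietoris with $\vN(G)$-coefficients, then pass to the colimit using \cref{colimit homology}, \cref{direct sums}, and \cref{subgroup induction}. The only cosmetic difference is in degree~$1$: the paper tracks the module $H_1(G^{(k)};\vN(G))$ explicitly as a direct sum containing a codimension-zero submodule of $\vN(G)^{k-1}$ and then invokes continuity of $\dim_{\vN(G)}$ under directed unions, whereas you track only dimensions via additivity and then pass to the limit using injectivity of the transition maps together with monotonicity of $\dim_{\vN(G)}$---both routes give $\bto(G)=\infty$ for the same reason.
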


This is usually stated for finite free products only \cite[Theorem 1.35(5) on p.~38]{luck} so we include a proof.

\begin{proof}
    Let $P_i$ denote the free product $G_1 \ast \cdots \ast G_i$, so $P_{i+1} = P_i \ast G_{i+1}$. By Mayer--Vietoris, for all $n \geq 2$ we have an isomorphism
    \[H_n(P_{i+1}; \vN(G)) \cong H_n(P_i; \vN(G)) \oplus H_n(G_{i+1}; \vN(G)),\]
    which shows by induction that
    \[H_n(P_i; \vN(G)) = \bigoplus_{j \leq i} H_n(G_j; \vN(G)).\]
    By Mayer--Vietoris again, we have an exact sequence
    \[0 \to H_1(P_{i+1}; \vN(G)) \to H_1(P_i; \vN(G)) \oplus H_1(G_{i+1}; \vN(G)) \to \vN(G) \to \vN(G)_{P_{i+1}}.\]
    By \cref{subgroup induction}, the last term has dimension
    \[\dim_{\vN(G)} H_0(P_{i+1}; \vN(G)) = \dim_{\vN(P_{i+1})} H_0(P_{i+1}; \vN(P_{i+1})) = \btz(P_{i+1}) = 0.\]
    So by induction
    \[H_1(P_i; \vN(G)) = \left( \bigoplus_{j \leq i} H_1(G_j; \vN(G)) \right) \oplus V_i,\]
    where $V_i$ is a submodule of $\vN(G)^{i-1}$ of codimension $0$. We conclude from Propositions \ref{colimit homology} and \ref{subgroup induction}, together with the fact that $\dim_{\vN(G)}$ respects directed unions \cite[Theorem 0.6(b)]{lueck-dimension}.
\end{proof}

We can now begin the construction.

\begin{proposition}
\label{countable realisation}

    Let $p$ be a prime. There exists a family of groups $(C_x)_{x \in \R_{> 0}}$ with the following properties.
    \begin{enumerate}
        \item Each $C_x$ is countable and residually finite.
        \item $\bto(C_x) = \infty, \btt(C_x) = x$.
        \item Each $C_x$ has cohomological dimension $2$ modulo $p$. In particular, $\btn(C_x) = 0$ for all $n \geq 3$.
    \end{enumerate}
\end{proposition}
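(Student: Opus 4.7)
I will build $C_x$ as a countable free product of simple building blocks, exploiting \cref{direct product} and \cref{free product} as the main tools.

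For integers $b\geq 1$ and $k\geq 0$, set $H_{b,k}\coloneqq \pi_1(N_{b+2})\times\mathbb{Z}/p^k$, where $N_g$ denotes the closed non-orientable surface of genus~$g$. For $g\geq 3$ this surface is closed, aspherical and hyperbolic, so $\pi_1(N_g)$ is a torsion-free Fuchsian group—hence residually finite—of cohomological dimension $2$ with $\btn(\pi_1(N_g))=0$ for $n\neq 2$ and $\btt(\pi_1(N_g))=g-2$. Since $\btz(\mathbb{Z}/p^k)=1/p^k$ and all higher $\btn(\mathbb{Z}/p^k)$ vanish, \cref{direct product} yields $\btn(H_{b,k})=\btn(\pi_1(N_{b+2}))/p^k$, hence $\btt(H_{b,k})=b/p^k$ and all other Betti numbers vanish. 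Residual finiteness is inherited from the factors. For the cohomological dimension modulo~$p$: over any ring $R$ in which $p$ is invertible, $R[\mathbb{Z}/p^k]$ is semisimple by Maschke, so tensoring the length-$2$ projective $R[\pi_1(N_{b+2})]$-resolution of $R$ with the trivial resolution over $R[\mathbb{Z}/p^k]$ gives a length-$2$ projective $R[H_{b,k}]$-resolution of $R$; the reverse inequality follows since $\pi_1(N_{b+2})$ embeds as a subgroup.

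Given $x\in \mathbb{R}_{>0}$, I write $x=\sum_{j\in J} b_j/p^{k_j}$ with $b_j,k_j\in\mathbb{N}$: such a decomposition exists for every positive real because $\mathbb{Z}[1/p]\cap\mathbb{R}_{>0}$ is dense in $\mathbb{R}_{>0}$, for instance by writing the integer part of~$x$ as a sum of $1$'s and the fractional part via its base-$p$ expansion (taking $b=1$ for each summand of the form $1/p^i$).

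Finally define
\[ C_x\,\coloneqq\,\Bigl(\ast_{j\in J} H_{b_j,k_j}\Bigr)\ast\Bigl(\ast_{i\in\mathbb{N}}\mathbb{Z}\Bigr), \]
a countable free product of countably many infinite residually finite groups. \cref{free product} gives $\bto(C_x)=\infty$ and, for $n\geq 2$, $\btn(C_x)=\sum_j\btn(H_{b_j,k_j})+\sum_i\btn(\mathbb{Z})$. Since $\btn(\mathbb{Z})=0$ for $n\geq 1$ and $\btn(H_{b_j,k_j})$ is concentrated in degree $2$, one obtains $\btt(C_x)=\sum_j b_j/p^{k_j}=x$ and $\btn(C_x)=0$ for $n\geq 3$. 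Residual finiteness of $C_x$ is a standard consequence of residual finiteness of each factor (the reduced form of any element uses only finitely many factors), and for any $R$ in which $p$ is invertible, $cd_R(C_x)$ equals the supremum over factor dimensions, namely~$2$, since the classifying space of a free product is the wedge of the classifying spaces. The main (modest) technical point is the cohomological dimension check for $H_{b,k}$, which is a routine Künneth-and-Maschke argument; every other verification is packaged into \cref{direct product} and \cref{free product}.
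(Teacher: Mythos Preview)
There is a genuine error in the computation of $\ell^2$-Betti numbers of surface groups. For a closed aspherical surface $\Sigma$ (orientable or not), $\ell^2$-Poincar\'e duality gives $\btt(\pi_1(\Sigma))=\btz(\pi_1(\Sigma))=0$, since the universal cover is $\mathbb{H}^2$ and the fundamental group is infinite; the non-vanishing $\ell^2$-Betti number is in degree~$1$, namely $\bto(\pi_1(N_g))=-\chi(N_g)=g-2$. Hence your building block has $\btt(H_{b,k})=0$ and $\bto(H_{b,k})=b/p^k$, and after taking the free product you obtain $\btt(C_x)=0$ rather than $x$. The contribution you want lands one degree too low.

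The fix is straightforward and is exactly what the paper does: replace the surface group by a group whose $\ell^2$-Betti numbers are concentrated in degree~$2$, for instance $F_{r+1}\times F_{r+1}$, which has $\btt=r^2$ and $\bto=\btz=0$ by \cref{direct product}. With $A_i=\Z/p^i\times F_{r_i+1}^2$ one gets $\btt(A_i)=r_i^2/p^i$ (the paper writes $r_i/p^i$, using $F_{r_i+1}^2$ to mean a product of two free groups with $\bto$ chosen so that the product gives $r_i$), and the rest of your argument---the free product assembly, residual finiteness, and the Maschke/K\"unneth check for $cd_R$---goes through unchanged. Alternatively you could keep your surface groups and multiply by a further copy of $F_2$ to push the non-vanishing up by one degree.
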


\begin{proof}
    Write $x$ in base-$p$ as
    \[x = \sum_{i \in \N} \frac{r_i}{p^i},\]
    where $r_0 \in \N$; $r_i \in \{0, 1, \ldots, p-1\}$ for $i \geq 1$. For elements in $\Z\left[\frac{1}{p}\right]$ we pick the periodic representation, which allows to have infinite sums for every $x$. Now for each $i$ such that $r_i \neq 0$ we let $A_i \coloneqq \Z/p^i\Z \times F_{r_i + 1}^2$, where $F_r$ is the free group of rank $r$. Then
    \[\btz(A_i) = \bto(A_i) = 0; \btt(A_i) = \frac{r_i}{p^i},\]
    by \cref{direct product}. Moreover, $A_i$ has cohomological dimension $2$ modulo $p$.

    Now let
    \[C_x \coloneqq \Asterisk_{i \geq 0} A_i.\]
    By \cref{free product}, we have the desired values of $\ell^2$-Betti numbers. Moreover, $C_x$ has cohomological dimension $2$ modulo $p$, by \cref{dimensions}. Finally, $C_x$ is residually finite, being a free product of residually finite groups.
\end{proof}

\begin{proof}[Proof of \cref{suspension}]
    Let $C_x$ be the groups from \cref{countable realisation}. We embed each $C_x$ into a finitely generated residually finite group $R_x$ following \cite{WZ}. The group $R_x$ is an HNN extension of $F_2 \times C_x$ over a free subgroup, therefore it has cohomological dimension $3$ modulo~$p$. Moreover, $C_x$ is separable, that is closed in the profinite topology \cite[Lemma 4]{WZ}.
    
    Consider the group $R_x \times F_2^3$: it has cohomological dimension $6$ modulo $p$, and contains $C_x$ as a separable subgroup. Moreover, $\btn(R_x \times F_2^3) = 0$ for $n \leq 3$, by \cref{direct product}. Define
    \[G_x \coloneqq (R_x \times F_2^3) \Asterisk_{C_x} (R_x \times F_2^3),\]
    where the double is over the copy of $C_x$ coming from the first coordinate of each factor. Because $C_x$ is separable, $G_x$ is residually finite, and moreover it has cohomological dimension $6$ modulo $p$. By Mayer--Vietoris:
    \[H_n((R_x \times F_2^3); \vN(G_x))^2 \to H_n(G_x; \vN(G_x)) \to H_{n-1}(C_x; \vN(G_x)) \to H_{n-1}(R_x \times F_2^3); \vN(G_x))^2.\]
    Taking $\dim_{\vN(G_x)}$ and $n \leq 3$, the first and last term give $2 \btn(R_x \times F_2^3) = 2\btnmo(R_x \times F_2^3) = 0$, by \cref{subgroup induction}, so
    \[\btn(G_x) = \btnmo(C_x).\]
    The value of $\ell^2$-Betti numbers are as desired, by \cref{countable realisation}.
\end{proof}

\begin{remark}
    The construction is quite flexible, and can be made more elementary, or less elementary, depending on what properties one wants to impose, if we forgo residual finiteness.
    \begin{enumerate}
        \item We can choose $R_x$ to be a finitely generated group that is an HNN extension of $F_2 \ast C_x$ over a free subgroup, therefore has cohomological dimension $2$ modulo $p$ \cite{HNN}. This is the first and easiest embedding result for countable groups into finitely generated groups. Then $R_x \times F_2^3$ has cohomological dimension $5$ modulo $p$, hence so does $G_x$.
        \item We can bring the dimension down by one more, up to possibly adding some primes in the torsion. Let $L$ be a generalised triangle group that is hyperbolic and has property $(T)$ \cite{triangle}. Then there exists a finite set of primes $\pi$ (the ones appearing in the orders of the finite vertex groups) such that $L$ has cohomological dimension $2$ modulo $\pi$. Using \cite[Theorem 4.1]{dimensions}, we can embed $C_x$ into a group $R_x$ that is a quotient of $L$ and has cohomological dimension $2$ modulo $\pi \cup \{p\}$. In particular $R_x$ has property $(T)$ and so $\bto(R_x) = 0$ by \cite[Corollary 6]{bekkavalette}. So in the construction above it suffices to take $R_x \times F_2^2$ to get the correct vanishing of $\ell^2$-Betti numbers, and thus the resulting $G_x$ has cohomological dimension $4$ modulo $\pi \cup \{p\}$.
        \item If we forgo the finiteness of the dimension, then we can gain control on all other $\ell^2$-Betti numbers. We first choose $R_x'$ to be any finitely generated group containing $C_x$, and then let $R_x \coloneqq R_x' \wr \Z$, which has vanishing $\ell^2$-Betti numbers in all degrees \cite[Remark 7.6]{WWZZ}. So in the construction above we do not need to take products with $F_2$, and directly get $\btn(G_x) = \btnmo(C_x)$ for all $n \in \N$. Therefore $\btn(G_x) = 0$ for all $n \geq 4$.
    \end{enumerate}
\end{remark}

\begin{corollary}
\label{ME easy}
    Let $(G_x)_{x \in \R_{>0}}$ be as in \cref{suspension}. Then the groups $G_x \ast \Z$ are pairwise non-measure equivalent.
\end{corollary}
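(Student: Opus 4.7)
The plan is to adapt the strategy of \cref{ME}, using Gaboriau's theorem and the invariance of $\ell^2$-Betti numbers under measure equivalence up to a fixed coupling index $c>0$. The subtle point compared to \cref{ME} is that most $\ell^2$-Betti numbers of $G_x\ast\Z$ are either zero or infinite, so they provide no constraint under scaling by~$c$. The key is that taking a free product with $\Z$ introduces a \emph{universal finite positive value} at degree one, which pins down $c=1$; then \cref{suspension} separates the groups via $\bttt$.

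First I would compute the low-degree $\ell^2$-Betti numbers of $G_x\ast\Z$. Both $G_x$ and $\Z$ are infinite, so the two-factor case of the Mayer--Vietoris argument in the proof of \cref{free product}, applied to $P_2=G_1\ast G_2$, gives
\[\bto(G_x\ast\Z)=\bto(G_x)+\bto(\Z)+1=1,\]
using $\bto(G_x)=0$ from \cref{suspension} and $\bto(\Z)=0$. For $n\geq 2$, \cref{free product} yields $\btn(G_x\ast\Z)=\btn(G_x)+\btn(\Z)=\btn(G_x)$, and in particular $\bttt(G_x\ast\Z)=x$.

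Now suppose $G_x\ast\Z$ and $G_y\ast\Z$ are measure equivalent with coupling index $c>0$. By Gaboriau's theorem, $\btn(G_x\ast\Z)=c\cdot\btn(G_y\ast\Z)$ for every $n\in\N$. The case $n=1$ reads $1=c\cdot 1$, forcing $c=1$; the case $n=3$ then reads $x=y$, proving pairwise non-measure equivalence.

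There is no serious obstacle. The only point requiring attention is that \cref{free product} is stated for a countably infinite sequence of factors (where $\bto=\infty$), so one must verify the two-factor formula $\bto(G_1\ast G_2)=\bto(G_1)+\bto(G_2)+1$ separately; but this is exactly the inductive step $i=1\to i=2$ in the proof of \cref{free product}, where the codimension-zero $\vN(G)$-submodule $V_2\subseteq\vN(G)$ contributes dimension~$1$ to $\bto(P_2)$.
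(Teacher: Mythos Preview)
Your proof is correct and follows exactly the paper's route: use Gaboriau's theorem, pin down $c=1$ via $\bto(G_x\ast\Z)=1$, then distinguish the groups via $\bttt(G_x\ast\Z)=x$. You supply more justification than the paper does for the value $\bto(G_x\ast\Z)=1$ (the paper simply asserts it), and your observation that this comes from the two-factor step in the proof of \cref{free product} is accurate.
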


\begin{proof}
    If $G_x \ast \Z$ and $G_y \ast \Z$ are measure equivalent with index $c>0$, then $\btn(G_x \ast \Z) = c\btn(G_y \ast \Z)$ for all $n \in \N$ by Gaboriau's theorem \cite{gaboriau}. So $1 = \bto(G_x \ast \Z) = c \bto(G_y \ast \Z) = c$, that is $c = 1$, and so $x = \bttt(G_x \ast \Z) = c \bttt(G_y \ast \Z) = cy = y$.
\end{proof}

\footnotesize

\bibliographystyle{amsalpha}
\bibliography{ref}

\providecommand{\bysame}{\leavevmode\hbox to3em{\hrulefill}\thinspace}
\providecommand{\MR}{\relax\ifhmode\unskip\space\fi MR }
% \MRhref is called by the amsart/book/proc definition of \MR.
\providecommand{\MRhref}[2]{%
  \href{http://www.ams.org/mathscinet-getitem?mr=#1}{#2}
}
\providecommand{\href}[2]{#2}
\begin{thebibliography}{WWZZ25}

\bibitem[AH21]{abbotthull}
C.~Abbott and M.~Hull, \emph{Random walks and quasi-convexity in acylindrically
  hyperbolic groups}, J. Topol. \textbf{14} (2021), no.~3, 992--1026.
  \MR{4503955}

\bibitem[AMO07]{SQ}
G.~Arzhantseva, A.~Minasyan, and D.~Osin, \emph{The {SQ}-universality and
  residual properties of relatively hyperbolic groups}, J. Algebra \textbf{315}
  (2007), no.~1, 165--177. \MR{2344339}

\bibitem[Ati76]{atiyah}
M.~F. Atiyah, \emph{Elliptic operators, discrete groups and von {N}eumann
  algebras}, Colloque ``{A}nalyse et {T}opologie'' en l'{H}onneur de {H}enri
  {C}artan ({O}rsay, 1974), Ast\'{e}risque, No. 32-33, Soc. Math. France,
  Paris, 1976, pp.~43--72. \MR{420729}

\bibitem[Aus13]{austin}
T.~Austin, \emph{Rational group ring elements with kernels having irrational
  dimension}, Proc. Lond. Math. Soc. (3) \textbf{107} (2013), no.~6,
  1424--1448. \MR{3149852}

\bibitem[BdlHV08]{BHV}
B.~Bekka, P.~de~la Harpe, and A.~Valette, \emph{Kazhdan's property ({T})}, New
  Mathematical Monographs, vol.~11, Cambridge University Press, Cambridge,
  2008. \MR{2415834}

\bibitem[BH99]{bridsonhaefliger}
M.~R. Bridson and A.~Haefliger, \emph{Metric spaces of non-positive curvature},
  Grundlehren der mathematischen Wissenschaften [Fundamental Principles of
  Mathematical Sciences], vol. 319, Springer-Verlag, Berlin, 1999. \MR{1744486}

\bibitem[BHC62]{BHC}
A.~Borel and Harish-Chandra, \emph{Arithmetic subgroups of algebraic groups},
  Ann. of Math. (2) \textbf{75} (1962), 485--535. \MR{147566}

\bibitem[Bie81]{bieri}
R.~Bieri, \emph{Homological dimension of discrete groups}, second ed., Queen
  Mary College Mathematics Notes, Queen Mary College, Department of Pure
  Mathematics, London, 1981. \MR{715779}

\bibitem[Bow12]{bowditch}
B.~H. Bowditch, \emph{Relatively hyperbolic groups}, Internat. J. Algebra
  Comput. \textbf{22} (2012), no.~3, 1250016, 66. \MR{2922380}

\bibitem[Bro94]{brown}
K.~S. Brown, \emph{Cohomology of groups}, Graduate Texts in Mathematics,
  vol.~87, Springer-Verlag, New York, 1994, Corrected reprint of the 1982
  original. \MR{1324339}

\bibitem[BS23]{T2}
U.~Bader and R.~Sauer, \emph{Higher {K}azhdan property and unitary cohomology
  of arithmetic groups}, arXiv preprint arXiv:2308.06517, 2023.

\bibitem[BS25]{ICM}
\bysame, \emph{Higher property {T} and below-rank phenomena of lattices}, arXiv
  preprint arXiv:2511.20192, 2025.

\bibitem[BV97]{bekkavalette}
M.~E.~B. Bekka and A.~Valette, \emph{Group cohomology, harmonic functions and
  the first {$L^2$}-{B}etti number}, Potential Anal. \textbf{6} (1997), no.~4,
  313--326. \MR{1452785}

\bibitem[CCKW22]{triangle}
P.-E. Caprace, M.~Conder, M.~Kaluba, and S.~Witzel, \emph{Hyperbolic
  generalized triangle groups, property ({T}) and finite simple quotients}, J.
  Lond. Math. Soc. (2) \textbf{106} (2022), no.~4, 3577--3637. \MR{4524205}

\bibitem[CIOS23]{wreathlike}
I.~Chifan, A.~Ioana, D.~Osin, and B.~Sun, \emph{Wreath-like products of groups
  and their von {N}eumann algebras {I}: {$\rm W^\ast $}-superrigidity}, Ann. of
  Math. (2) \textbf{198} (2023), no.~3, 1261--1303. \MR{4660139}

\bibitem[CIOS26]{wreathlike2}
\bysame, \emph{Wreath-like products of groups and their von {N}eumann algebras
  {II}: outer automorphisms}, Duke Math. J. \textbf{175} (2026), no.~2,
  287--359. \MR{5030466}

\bibitem[CL63]{CL}
D.~E. Cohen and R.~C. Lyndon, \emph{Free bases for normal subgroups of free
  groups}, Trans. Amer. Math. Soc. \textbf{108} (1963), 526--537. \MR{170930}

\bibitem[DGO17]{DGO}
F.~Dahmani, V.~Guirardel, and D.~Osin, \emph{Hyperbolically embedded subgroups
  and rotating families in groups acting on hyperbolic spaces}, Mem. Amer.
  Math. Soc. \textbf{245} (2017), no.~1156, v+152. \MR{3589159}

\bibitem[FF25]{FFF}
F.~Fournier-Facio, \emph{Stability, approximable quotients, and higher property
  {(T)}}, arXiv preprint arXiv:2512.09180, 2025.

\bibitem[FFS25]{dimensions}
F.~Fournier-Facio and B.~Sun, \emph{Dimensions of finitely generated simple
  groups and their subgroups}, arXiv preprint arXiv:2503.01987, 2025.

\bibitem[Gab00]{gaboriau:cost}
D.~Gaboriau, \emph{Co\^{u}t des relations d'\'{e}quivalence et des groupes},
  Invent. Math. \textbf{139} (2000), no.~1, 41--98. \MR{1728876}

\bibitem[Gab02]{gaboriau}
\bysame, \emph{Invariants {$l^2$} de relations d'\'equivalence et de groupes},
  Publ. Math. Inst. Hautes \'Etudes Sci. (2002), no.~95, 93--150. \MR{1953191}

\bibitem[GMS19]{CL:RH}
D.~Groves, J.~F. Manning, and A.~Sisto, \emph{Boundaries of {D}ehn fillings},
  Geom. Topol. \textbf{23} (2019), no.~6, 2929--3002. \MR{4039183}

\bibitem[Gra14]{grabowski1}
\L. Grabowski, \emph{On {T}uring dynamical systems and the {A}tiyah problem},
  Invent. Math. \textbf{198} (2014), no.~1, 27--69. \MR{3260857}

\bibitem[Gra16]{grabowski2}
\bysame, \emph{Irrational {$l^2$} invariants arising from the lamplighter
  group}, Groups Geom. Dyn. \textbf{10} (2016), no.~2, 795--817. \MR{3513118}

\bibitem[Gro87]{gromov:hyp}
M.~Gromov, \emph{Hyperbolic groups}, Essays in group theory, Math. Sci. Res.
  Inst. Publ., vol.~8, Springer, New York, 1987, pp.~75--263. \MR{919829}

\bibitem[Hat02]{hatcher}
A.~Hatcher, \emph{Algebraic topology}, Cambridge University Press, Cambridge,
  2002. \MR{1867354}

\bibitem[HNN49]{HNN}
G.~Higman, B.~H. Neumann, and N.~Neumann, \emph{Embedding theorems for groups},
  J. London Math. Soc. \textbf{24} (1949), 247--254. \MR{32641}

\bibitem[Hul16]{hull}
M.~Hull, \emph{Small cancellation in acylindrically hyperbolic groups}, Groups
  Geom. Dyn. \textbf{10} (2016), no.~4, 1077--1119. \MR{3605028}

\bibitem[ITD25]{ITD}
A.~Ioana and R.~Tucker-Drob, \emph{A continuum of non-measure equivalent
  groups}, arXiv preprint arXiv:2512.04531, 2025.

\bibitem[KPV15]{kyed+petersen+vaes}
D.~Kyed, H.~D. Petersen, and S.~Vaes, \emph{{$L^2$}-{B}etti numbers of locally
  compact groups and their cross section equivalence relations}, Trans. Amer.
  Math. Soc. \textbf{367} (2015), no.~7, 4917--4956. \MR{3335405}

\bibitem[Kum80]{kumaresan}
S.~Kumaresan, \emph{On the canonical {$k$}-types in the irreducible unitary
  {$g$}-modules with nonzero relative cohomology}, Invent. Math. \textbf{59}
  (1980), no.~1, 1--11. \MR{575078}

\bibitem[LN23]{antonio}
A.~L{\'{o}}pez~Neumann, \emph{Finitely presented simple groups and measure
  equivalence}, Colloq. Math. \textbf{172} (2023), no.~2, 261--279.
  \MR{4589106}

\bibitem[LNP25]{LNP}
A.~L{\'o}pez~Neumann and H.~Paucar, \emph{Quantitative polynomial cohomology
  and applications to ${L}^p$-measure equivalence}, arXiv preprint
  arXiv:2512.18463, 2025.

\bibitem[LS01]{lyndon:schupp}
R.~C. Lyndon and P.~E. Schupp, \emph{Combinatorial group theory}, 1977 ed.,
  Classics in Mathematics, Springer-Verlag, Berlin, 2001. \MR{1812024}

\bibitem[L{\"u}c98]{lueck-dimension}
W.~L{\"u}ck, \emph{Dimension theory of arbitrary modules over finite von
  {N}eumann algebras and {$L^2$}-{B}etti numbers. {I}. {F}oundations}, J. Reine
  Angew. Math. \textbf{495} (1998), 135--162. \MR{1603853}

\bibitem[L{\"u}c02]{luck}
\bysame, \emph{{$L^2$}-invariants: theory and applications to geometry and
  {$K$}-theory}, Ergebnisse der Mathematik und ihrer Grenzgebiete. 3. Folge. A
  Series of Modern Surveys in Mathematics [Results in Mathematics and Related
  Areas. 3rd Series. A Series of Modern Surveys in Mathematics], vol.~44,
  Springer-Verlag, Berlin, 2002. \MR{1926649}

\bibitem[Mat62]{matsushima}
Y.~Matsushima, \emph{On {B}etti numbers of compact, locally symmetric
  {R}iemannian manifolds}, Osaka Math. J. \textbf{14} (1962), 1--20.
  \MR{141138}

\bibitem[New68]{newman}
B.~B. Newman, \emph{Some results on one-relator groups}, Bull. Amer. Math. Soc.
  \textbf{74} (1968), 568--571. \MR{222152}

\bibitem[Ols93]{olshanskii}
A.~Yu. Olshanskii, \emph{On residualing homomorphisms and {$G$}-subgroups of
  hyperbolic groups}, Internat. J. Algebra Comput. \textbf{3} (1993), no.~4,
  365--409. \MR{1250244}

\bibitem[OOS09]{lacunary}
A.~Yu. Olshanskii, D.~Osin, and M.~V. Sapir, \emph{Lacunary hyperbolic groups},
  Geom. Topol. \textbf{13} (2009), no.~4, 2051--2140, With an appendix by M.
  Kapovich and B. Kleiner. \MR{2507115}

\bibitem[Osi07]{osin}
D.~Osin, \emph{Peripheral fillings of relatively hyperbolic groups}, Invent.
  Math. \textbf{167} (2007), no.~2, 295--326. \MR{2270456}

\bibitem[Osi10]{osin:sc}
\bysame, \emph{Small cancellations over relatively hyperbolic groups and
  embedding theorems}, Ann. of Math. (2) \textbf{172} (2010), no.~1, 1--39.
  \MR{2680416}

\bibitem[Osi25]{osin:measures}
\bysame, \emph{{${\rm Out}(F_n)$}-invariant probability measures on the space
  of {$n$}-generated marked groups}, Groups Geom. Dyn. \textbf{19} (2025),
  no.~2, 431--444. \MR{4940646}

\bibitem[OT13]{osinthom}
D.~Osin and A.~Thom, \emph{Normal generation and {$\ell^2$}-{B}etti numbers of
  groups}, Math. Ann. \textbf{355} (2013), no.~4, 1331--1347. \MR{3037017}

\bibitem[Pic06]{pichot}
M.~Pichot, \emph{Semi-continuity of the first {$l^2$}-{B}etti number on the
  space of finitely generated groups}, Comment. Math. Helv. \textbf{81} (2006),
  no.~3, 643--652. \MR{2250857}

\bibitem[PS24a]{PS2}
N.~Petrosyan and B.~Sun, \emph{Cohomology of group theoretic {D}ehn fillings
  {II}}, Adv. Math. \textbf{437} (2024), Paper No. 109412, 56. \MR{4669335}

\bibitem[PS24b]{PS3}
\bysame, \emph{{$L^2$}-{B}etti numbers of {D}ehn fillings}, arXiv preprint
  arXiv:2412.16090, 2024.

\bibitem[PT11]{peterson+thom}
J.~Peterson and A.~Thom, \emph{Group cocycles and the ring of affiliated
  operators}, Invent. Math. \textbf{185} (2011), no.~3, 561--592. \MR{2827095}

\bibitem[Sau05]{sauer-phd}
R.~Sauer, \emph{{$L^2$}-{B}etti numbers of discrete measured groupoids},
  Internat. J. Algebra Comput. \textbf{15} (2005), no.~5-6, 1169--1188.
  \MR{2197826}

\bibitem[Sel60]{selberg}
A.~Selberg, \emph{On discontinuous groups in higher-dimensional symmetric
  spaces}, Contributions to function theory ({I}nternat. {C}olloq. {F}unction
  {T}heory, {B}ombay, 1960), Tata Inst. Fund. Res., Bombay, 1960, pp.~147--164.
  \MR{130324}

\bibitem[Sun20]{PS1}
B.~Sun, \emph{Cohomology of group theoretic {D}ehn fillings {I}:
  {C}ohen-{L}yndon type theorems}, J. Algebra \textbf{542} (2020), 277--307.
  \MR{4019787}

\bibitem[Wei94]{weibel}
C.~A. Weibel, \emph{An introduction to homological algebra}, Cambridge Studies
  in Advanced Mathematics, vol.~38, Cambridge University Press, Cambridge,
  1994. \MR{1269324}

\bibitem[WWZZ25]{WWZZ}
F.~Wu, X.~Wu, M.~Zhao, and Z.~Zhou, \emph{Embedding groups into boundedly
  acyclic groups}, J. Lond. Math. Soc. (2) \textbf{111} (2025), no.~5, Paper
  No. e70164, 39. \MR{4899448}

\bibitem[WZ96]{WZ}
J.~S. Wilson and P.~A. Zalesskii, \emph{An embedding theorem for certain
  residually finite groups}, Arch. Math. (Basel) \textbf{67} (1996), no.~3,
  177--182. \MR{1402516}

\end{thebibliography}

\vspace{0.5cm}

\normalsize

\noindent{\textsc{Department of Pure Mathematics and Mathematical Statistics, University of Cambridge, UK}}

\noindent{\textit{E-mail address:} \texttt{ff373@cam.ac.uk}}

\vspace{0.2cm}

\noindent{\textsc{Faculty of Mathematics, Karlsruhe Institute of Technology, Germany}}

\noindent{\textit{E-mail address:} \texttt{roman.sauer@kit.edu}}

\end{document}